\theoremstyle{plain}
\begin{document}
\pagestyle{myheadings}

\newtheorem{corollary}{\sc Corollary}[section]
\newtheorem{lemma}{\sc Lemma}[section]
\newtheorem{remark}{\it Remark}[section]
\newtheorem{proposition}{\sc Proposition}[section]
\newtheorem{definition}{\it Definition}[section]
\newtheorem{theorem}{\sc Theorem}[section]
\newtheorem{example}{\it Example}[section]

\renewcommand{\theequation}{\thesection.\arabic{equation}}
\normalsize

\setcounter{equation}{0}

\title{Efficient Solutions in Generalized Linear \\ Vector Optimization}

\author{Nguyen Ngoc Luan\footnote{Department of
		Mathematics and Informatics, Hanoi National University of Education, 136 Xuan Thuy, Hanoi, Vietnam; email: luannn@hnue.edu.vn.}}
\maketitle
\date{}

\medskip
\begin{quote}
\noindent {\bf Abstract.} This paper establishes several new facts on generalized polyhedral convex sets and shows how they can be used in vector optimization. Among other things, a scalarization formula for the efficient solution sets of generalized vector optimization problems is obtained. We also prove that the efficient solution set of a generalized linear vector optimization problem in a locally convex Hausdorff topological vector space is the union of finitely many generalized polyhedral convex sets and it is connected by line segments. 

\medskip
\noindent {\bf Keywords:}\ Convex polyhedron, generalized convex polyhedron, locally convex Hausdorff topological vector space, generalized linear vector optimization problem, solution existence theorem.   
\medskip

\noindent {\bf AMS Subject Classifications:}\ 49N10; 90C05; 90C29; 90C48.

\end{quote}

\section{Introduction}
\markboth{\centerline{\it Introduction}}{\centerline{\it N.N.~Luan}} \setcounter{equation}{0}  

One calls a vector optimization problem {\it linear} if the objective function is linear and the constraint set is a polyhedral convex set. Due to the classical Arrow-Barankin-Blackwell theorem (the ABB theorem; see \cite{ABB_1953, Jahn, Luc}), for a finite dimensional linear vector optimization problem, the Pareto solution set and the weak Pareto solution set are connected by line segments and each of them is an union of finitely many faces of the constraint set. Extensions of the result for linear vector optimization problems in Banach spaces can be seen in \cite{Yang_Yen_2010, ZhengYang_2008}, where the focus point was piecewise linear vector optimization. In \cite{Zheng_1998}, it was shown that set of positive proper efficient points is dense in the set of efficient points with a pointed convex cone in a topological vector space.

\medskip
Scalarization methods, by which one replaces a vector optimization problem by a scalar optimization problem depending on a parameter, have attracted attentions of many researchers (see, e.g., Eichfelder in \cite{Eich08a}, Hoa, Phuong and Yen in \cite{Hoa_Phuong_Yen_2005}, Huong and Yen in \cite{Huong_Yen_2014}, Jahn in  \cite{Jahn, Jahn_1984}, Luc in \cite{Luc, Luc_1987, Luc2016}, Pascoletti and Serafini in \cite{Pascoletti_Serafini}, Yen and Phuong in \cite{Yen_Phuong_2000}, Zheng in \cite{Zheng_2000}). 

\medskip
Recently,  in locally convex Hausdorff topological vector spaces setting, using a representation for generalized polyhedral convex sets, Luan and Yen \cite{Luan_Yen_2015} have obtained solution existence theorems for generalized linear programming problems, a scalarization formula for the weakly efficient solution set of a generalized linear vector optimization problem, and proved that the latter is the union of finitely many generalized polyhedral convex sets. It is reasonable to look for similar results for the corresponding efficient solution set.  

\medskip
Our aim is to establish several new facts on generalized  polyhedral convex sets and shows how they can be used in vector optimization. Among other things, a scalarization formula for the efficient solution set of a generalized vector optimization problem is obtained. We also prove that the efficient solution set of a generalized linear vector optimization problem in a locally convex Hausdorff topological vector space is the union of finitely many generalized polyhedral convex sets and it is connected by line segments. The present paper can be considered as a continuation of \cite{Luan_Yen_2015}.

\medskip
The organization of our paper is as follows. Section 2 is devoted to an investigation on generalized polyhedral convex  sets. On that basis, Section 3 solves some questions about the efficient solution set of generalized linear vector optimization problems which arised after the paper by Luan and Yen \cite{Luan_Yen_2015}.

\section{Properties of Generalized Polyhedral Convex Sets}
\markboth{\centerline{\it Properties of Generalized Polyhedral Convex  Sets}}{\centerline{\it N.N.~Luan}} \setcounter{equation}{0}  

In this section, first we give a sufficient condition for the image of a generalized polyhedral convex set via a continuous linear map to be a generalized convex polyhedron. Second, we characterize the relative interior of a generalized polyhedral convex cone and of its dual cone. The obtained results will be used intensively in the sequel. 
	
\subsection{Images of  generalized  convex polyhedra}
Let $X$ be a {\it locally convex Hausdorff topological vector space} with the dual space denoted by $X^*$. For any $x^* \in X^*$ and $x \in X$, $\langle x^*, x \rangle$ indicates the value of $x^*$ at $x$. 

\begin{definition}{\rm (See \cite[p.~133]{Bonnans_Shapiro_2000})  A subset $C \subset X$ is said to be a \textit{generalized polyhedral convex set} (a \textit{generalized convex polyhedron} for short) if there exist $x^*_i \in X^*$, $\alpha_i \in \mathbb R$, $i=1,2,\dots,p$, and a closed affine subspace $L \subset X$, such that 
\begin{equation*}
			C=\big\{ x \in X \mid x \in L, \; \langle x^*_i, x \rangle \leq \alpha_i,\ \;  i=1,\dots,p\big\}.
\end{equation*} 
If $C$ admits the last representation for $L=X$ and for some $x^*_i \in X^*$, $\alpha_i \in \mathbb R$, $i=1,2,\dots,p$, then it is called a \textit{polyhedral convex set} (or a \textit{convex polyhedron}).}
\end{definition}
	
From the definition it follows that a generalized polyhedral convex set is a closed set. Note also that, in the finite dimensional space, $D$  is a generalized polyhedral convex set if and only if $D$ is a convex polyhedron.
	
The following representation theorem for generalized convex polyhedral in the spirit of \cite{Rock_book_1970} is crucial  for our subsequent proofs. 
	
\begin{theorem}{\rm  (\cite[Theorem~2.7]{Luan_Yen_2015})}\label{Luan_Yen_2015_main} A nonempty subset $D \subset X$ is a generalized convex polyhedron if and only if there exist $u_1, \dots, u_k \in X$, $v_1, \dots, v_{\ell} \in X$, and a closed linear subspace  $X_0 \subset X$ such that
\begin{equation}\label{rep_D}
		\begin{aligned}
		D=\Bigg\{ \sum\limits_{i=1}^k \lambda_i u_i + \sum\limits_{j=1}^\ell \mu_j v_j \mid & \lambda_i \geq 0, \ \forall i=1,\dots,k,    \\ 
		&\sum\limits_{i=1}^k \lambda_i=1,\ \, \mu_j \geq 0,\ \forall j=1,\dots,\ell   \Bigg\}+X_0.&& 
		\end{aligned}
\end{equation}
\end{theorem}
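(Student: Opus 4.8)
The plan is to reduce both implications to the classical Minkowski--Weyl (finite generation) theorem for convex polyhedra in $\mathbb{R}^n$, using the finitely many functionals (resp.\ the finitely many generators) to pass to a finite-dimensional image, and then to lift the finite-dimensional representation back to $X$ with the help of the Hahn--Banach extension theorem. The finite-dimensional core will be invoked as known; the genuine work is the topological reduction.

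For the necessity part, suppose $D=\{x\in L:\langle x_i^*,x\rangle\le\alpha_i,\ i=1,\dots,p\}$ with $L=a+M$, where $M$ is a closed linear subspace. I would introduce the continuous linear map $\Phi\colon M\to\mathbb{R}^p$ given by $\Phi(m)=(\langle x_1^*,m\rangle,\dots,\langle x_p^*,m\rangle)$, whose kernel $X_0:=M\cap\bigcap_{i=1}^p\ker x_i^*$ is a closed linear subspace. Writing $x=a+m$, the constraints read $\Phi(m)\le\beta$ (componentwise), where $\beta_i=\alpha_i-\langle x_i^*,a\rangle$. The set $P:=\{y\in\Phi(M):y\le\beta\}$ is an ordinary convex polyhedron in the finite-dimensional space $\Phi(M)\subseteq\mathbb{R}^p$, so Minkowski--Weyl yields $P=\mathrm{conv}\{p_1,\dots,p_k\}+\mathrm{cone}\{q_1,\dots,q_\ell\}$. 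Choosing preimages $u_i'\in M$ of $p_i$ and $v_j'\in M$ of $q_j$ under $\Phi$, one checks that $m$ satisfies the constraints iff $\Phi(m)\in P$ iff $m\in\{\sum_i\lambda_i u_i'+\sum_j\mu_j v_j':\lambda_i\ge 0,\ \sum_i\lambda_i=1,\ \mu_j\ge 0\}+\ker\Phi$. Since $\ker\Phi=X_0$ and $\sum_i\lambda_i=1$, translating by $a$ yields representation~\eqref{rep_D} with $u_i=a+u_i'$, $v_j=v_j'$, and $X_0$ as above.

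For the sufficiency part, suppose $D=P_0+X_0$ with $P_0=\mathrm{conv}\{u_1,\dots,u_k\}+\mathrm{cone}\{v_1,\dots,v_\ell\}$ contained in the finite-dimensional subspace $E:=\mathrm{span}(\{u_i\}\cup\{v_j\})$. Since $X_0$ is closed, the quotient $X/X_0$ is again a locally convex Hausdorff space; let $\pi\colon X\to X/X_0$ be the canonical projection. Because $D$ is $X_0$-saturated, $D=\pi^{-1}(\pi(D))$ and $\pi(D)=\pi(P_0)=\mathrm{conv}\{\pi(u_i)\}+\mathrm{cone}\{\pi(v_j)\}$ is a polyhedron in the finite-dimensional subspace $\pi(E)$. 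By finite-dimensional Minkowski--Weyl, $\pi(P_0)=\{w\in\pi(E):\langle\phi_r,w\rangle\le\gamma_r,\ r=1,\dots,q\}$ for suitable $\phi_r\in(\pi(E))^*$. Extending each $\phi_r$ to $\tilde\phi_r\in(X/X_0)^*$ by Hahn--Banach and pulling back, I set $x_r^*:=\tilde\phi_r\circ\pi\in X^*$, $\alpha_r:=\gamma_r$, and take $L:=\pi^{-1}(\pi(E))=E+X_0$; this gives $D=\{x\in L:\langle x_r^*,x\rangle\le\alpha_r\}$, a generalized convex polyhedron.

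The main obstacle will be verifying that the infinite-dimensional ingredients behave correctly rather than the finite-dimensional algebra. Specifically, one must confirm that $X_0$ (an intersection of $M$ with finitely many kernels) and $L=E+X_0$ (a finite-dimensional perturbation of a closed subspace) are closed, that the quotient $X/X_0$ is Hausdorff and locally convex so that $\pi(E)$ is a closed finite-dimensional subspace and every functional on it extends continuously, and that $\pi$ is continuous and open so that the saturation identity $D=\pi^{-1}(\pi(D))$ and the pullback of the H-representation are valid. Once these standard but essential facts are in place, the two implications close.
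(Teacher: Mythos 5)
The paper itself offers no proof of this statement: it is imported verbatim as \cite[Theorem~2.7]{Luan_Yen_2015}, so there is no in-text argument to compare yours against. Your proposal is a correct, self-contained proof, and it follows what is essentially the canonical route (and, as far as one can tell from the phrase ``in the spirit of \cite{Rock_book_1970}'', the route of the cited source as well): collapse the problem onto a finite-dimensional space, invoke Minkowski--Weyl there, and transport the answer back. Both reductions are sound. In the necessity part, $\Phi\colon M\to\mathbb R^p$ has closed kernel $X_0$, the set $P=\Phi(M)\cap\{y\le\beta\}$ is a nonempty polyhedron in $\Phi(M)$ (nonemptiness, inherited from $D\neq\emptyset$, is what guarantees $k\ge 1$ so that the constraint $\sum_i\lambda_i=1$ is satisfiable and the translation by $a$ can be absorbed into the $u_i$), and the identity $\Phi^{-1}(P)=\mathrm{conv}\{u_i'\}+\mathrm{cone}\{v_j'\}+\ker\Phi$ is exactly right. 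In the sufficiency part, the saturation $D=\pi^{-1}(\pi(D))$, the closedness of the finite-dimensional subspace $\pi(E)$ in the Hausdorff quotient $X/X_0$ (hence of $L=E+X_0=\pi^{-1}(\pi(E))$ in $X$), the automatic continuity of $\phi_r$ on $\pi(E)$, and the Hahn--Banach extension to $(X/X_0)^*$ are all standard and all hold; pulling back through the continuous $\pi$ then yields the required representation $D=\{x\in L:\langle x_r^*,x\rangle\le\alpha_r\}$. The only housekeeping you defer to the final paragraph (closedness of $X_0$, of $E+X_0$, Hausdorffness and local convexity of the quotient, continuity and openness of $\pi$) consists of genuinely standard facts, none of which fails in a locally convex Hausdorff space, so I see no gap.
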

	
\medskip 
We are now in a position to extend Lemma 3.2 from the paper of Zheng and Yang \cite{ZhengYang_2008}, which was given in a normed spaces setting, to the case of convex polyhedra in locally convex Hausdorff topological vector spaces.
	
\begin{proposition}\label{image_of_D} If $T: X \rightarrow Y$ is a linear mapping between locally convex Hausdorff topological vector spaces with $Y$ being a space of finite dimension and if $D \subset X$ is a generalized polyhedral convex set, then $T(D)$ is a convex polyhedron of $Y$.		
\end{proposition}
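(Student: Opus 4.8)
The plan is to reduce everything to the representation theorem (Theorem~\ref{Luan_Yen_2015_main}). First I would dispose of the trivial case: if $D=\emptyset$, then $T(D)=\emptyset$, which is a convex polyhedron of $Y$ (it is cut out by an inconsistent finite system of linear inequalities). So I may assume $D\neq\emptyset$. By Theorem~\ref{Luan_Yen_2015_main}, there exist points $u_1,\dots,u_k$, $v_1,\dots,v_\ell\in X$ and a closed linear subspace $X_0\subset X$ for which $D$ has the form \eqref{rep_D}.

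Next I would push this representation forward through $T$. Because $T$ is linear, it commutes with the forming of convex combinations, nonnegative combinations, and Minkowski sums; hence
\[
T(D)=\Bigg\{\sum_{i=1}^k\lambda_i T(u_i)+\sum_{j=1}^\ell\mu_j T(v_j)\ \Big|\ \lambda_i\ge0,\ \sum_{i=1}^k\lambda_i=1,\ \mu_j\ge0\Bigg\}+T(X_0).
\]
Thus $T(D)$ is generated, in exactly the pattern of \eqref{rep_D}, by the finitely many vectors $T(u_1),\dots,T(u_k)$ and $T(v_1),\dots,T(v_\ell)$ of $Y$, together with the linear subspace $T(X_0)$.

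The decisive observation is that $T(X_0)$ is a linear subspace of the finite-dimensional space $Y$, and is therefore automatically closed. Consequently the right-hand side above satisfies all the hypotheses of the ``if'' part of Theorem~\ref{Luan_Yen_2015_main}, now read in the space $Y$ rather than in $X$. Invoking that direction shows that $T(D)$ is a generalized convex polyhedron in $Y$; and since $Y$ is finite-dimensional, the remark following Definition~2.1 identifies generalized convex polyhedra with convex polyhedra. Hence $T(D)$ is a convex polyhedron of $Y$, as claimed.

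I do not anticipate a serious obstacle, precisely because Theorem~\ref{Luan_Yen_2015_main} converts the (potentially infinite-dimensional, topological) problem in $X$ into a purely finite-dimensional one in $Y$. The single point deserving care is the closedness of the lineality subspace: in the original representation, closedness of $X_0$ is a genuine hypothesis, whereas after transport it comes for free from $\dim Y<\infty$. It is worth noting that the argument never uses continuity of $T$; linearity alone suffices, since the closedness in the conclusion is delivered by the Minkowski--Weyl structure encoded in Theorem~\ref{Luan_Yen_2015_main} rather than by a topological image argument.
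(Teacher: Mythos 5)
Your proposal is correct and follows essentially the same route as the paper: push the representation of Theorem~\ref{Luan_Yen_2015_main} forward through $T$, observe that $T(X_0)$ is automatically closed because $Y$ is finite-dimensional, and invoke the converse direction of that theorem. Your extra remarks (the empty case, and the fact that continuity of $T$ is never used) are accurate but do not change the argument.
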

	
\begin{proof} Suppose that $D$ is of the form \eqref{rep_D}. We have
	\begin{equation*}
	\begin{aligned}
	T(D)=\Bigg\{ \sum\limits_{i=1}^k \lambda_i (Tu_i) + \sum\limits_{j=1}^\ell \mu_j (Tv_j) \mid &\,  \lambda_i \geq 0, \ \forall i=1,\dots,k,    \\ 
	&\sum\limits_{i=1}^k \lambda_i=1,\ \, \mu_j \geq 0,\ \forall j=1,\dots,\ell   \Bigg\}+T(X_0).&& 
	\end{aligned}
	\end{equation*}
As $T(X_0)$ is a linear subspace of the finite dimensional space $Y$, $T(X_0)$ is a closed linear subspace. Hence, by Theorem \ref{Luan_Yen_2015_main}, $T(D)$ is a polyhedral convex set of $Y$.
\end{proof}
	
One may wonder: \textit{Whether the assumption on the finite dimensionality of $Y$ can be removed from Proposition \ref{image_of_D}, or not?} Let us solve this question by an example.
\begin{example}{\rm Let $X=C[0, 1]$ be the linear space of continuous real valued functions on the interval $[0, 1]$ with the norm defined by $||x||=\max\big\{|x(t)| \, \mid \, t \in [0,1] \big\}$.
Let $Y~=C_0[0,1]:=\big\{y \in C[0,1] \, \mid \, y(0)=0 \big\}$
and let $T: X \rightarrow Y$ be the bounded linear operator given by $(Tx)(t)=\int_{0}^{t}x(\tau) d \tau, $ where integral is Riemannian. Clearly, $X$ is a generalized polyhedral convex set in $X$ and
			\begin{equation*}
			T(X)=\Big\{ y \in C_0[0,1] \mid y \text{ is continuously differentiable on } (0,1) \Big\}.  
			\end{equation*}		
To show that $T(X)$ is dense in $Y$, we take any $y \in Y$. By the Stone-Weierstrass Theorem (see, e.g., \cite[Theorem~1.1, p.~52]{Lang_1993} and  \cite[Corollary~1.3, p.~54]{Lang_1993}), there exists a sequence of polynomial functions in one variable $\{p_k\}$ converging uniformly to~$y$ in~$Y$. Put $q_k(t)=p_k(t)-p_k(0)$ for all $t \in [0,1]$. It is easily seen that $\{q_k\}$ converges uniformly to $y$ in $Y$  and $\{q_k\} \subset T(X)$. As $T(X) \neq Y$, we see that $T(X)$ is a non-closed linear subspace set of~$Y$. Hence, $T(X)$ cannot be a generalized polyhedral convex set. 
}\end{example}
		
A careful analysis of Example 2.1 leads us to the following question: \textit{Whether the image of a generalized polyhedral convex set via a surjective linear operator from a Banach space to another Banach space is a generalized polyhedral convex set, or not?}
		
\begin{example}{\rm Let $X=C[0, 1] \times C[0,1]$ with the norm defined by 
				\begin{equation*}
				||(x, u)||=\max_{t \in [0, 1]} |x(t)|+\max_{t \in [0, 1]} |u(t)|,
				\end{equation*}
$Y=C[0,1]$ and a linear mapping $T: X \rightarrow Y$ be defined by
				\begin{equation*}
				T(x, u)(t)=\int_{0}^{t}x(\tau) d \tau + u(t), 
				\end{equation*}
where integral is Riemannian. Note that $D:=C[0,1] \times \{0\}$ is a generalized polyhedral convex set of $X$, but
				\begin{equation*}
				T(D)=\Big\{ y \in C_0[0,1] \mid y \text{ is continuously differentiable on } (0,1) \Big\}  
				\end{equation*}		
is not a generalized polyhedral convex set of $Y$. 
}\end{example}
			
\subsection{The relative interior of a polyhedral convex cone}
\begin{definition}{\rm (See \cite[p.~20]{Bonnans_Shapiro_2000})  For a convex subset $A$ of a locally convex Hausdorff topological vector space $X$, we say that a point $u$ belongs to the \textit{relative interior} of~$A$, denoted by ${\rm ri}A$, if there exists a neighborhood $U$ of $u$ in $X$ such that $U \cap {\rm cl(aff}A) \subset  A$, where ${\rm cl(aff}A)$ is the closure of the affine hull of $A$. }
\end{definition}
			
By \cite[Theorem 6.4]{Rock_book_1970} and \cite[Lemma 2.5]{Luan_Yen_2015}, if $X$ is a finite-dimensional Hausdorff topological vector space, and $A \subset X$ is a nonempty convex set, then $u \in {\rm ri}A$ if and only if, for every $x \in A$, there exists $\varepsilon >0$ such that $u-\varepsilon(x-u)$ belongs to $A$.  
			
One says that a subset $C$ of a locally convex Hausdorff topological vector space is a {\it cone} if $\lambda u \in C$ for all $u \in C$ and for every $\lambda >0$. Note that a cone may not contain~0.  
			
\begin{theorem}\label{reps_riC}  If $C \subset X$ is a generalized polyhedral convex cone in a locally convex Hausdorff topological vector space. If $C=\left\{ \sum\limits_{i=1}^p \lambda_i u_i \mid \lambda_i \geq 0,\ i=1,\dots,p  \right\}$, where $u_i \in X$ for $i=1,\dots,p$, then
				\begin{equation}\label{reps_riC_eq}
				{\rm ri}C=\left\{ \sum\limits_{i=1}^p \lambda_i u_i \mid \lambda_i > 0,\ i=1,\dots,p  \right\}.
				\end{equation}
\end{theorem}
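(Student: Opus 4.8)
The plan is to reduce the statement to a finite-dimensional one and then invoke the characterization of relative interior recalled just after Definition 2.2. First I would set $M := {\rm span}\{u_1,\dots,u_p\}$. Since $M$ is finite-dimensional it is a closed linear subspace of the Hausdorff space $X$, and clearly $C \subset M$. Because $0 = \sum_{i=1}^p 0\cdot u_i \in C$ and each $u_i \in C$, the affine hull of $C$ is the linear span of the $u_i$, that is ${\rm aff}\,C = M$; being closed, ${\rm cl(aff}\,C) = M$. Consequently $u \in {\rm ri}\,C$ (computed in $X$ via Definition 2.2) if and only if some $X$-neighborhood $U$ of $u$ satisfies $U \cap M \subset C$, which is exactly the condition that $u$ lie in the interior of $C$ relative to $M$. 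As all Hausdorff vector topologies on the finite-dimensional space $M$ coincide, the relative interior of $C$ is intrinsic and may be computed inside $M$; this is what legitimizes applying the finite-dimensional criterion.

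Writing $S := \big\{ \sum_{i=1}^p \lambda_i u_i \mid \lambda_i > 0 \big\}$ for the right-hand side of \eqref{reps_riC_eq}, I would prove the two inclusions separately. For $S \subset {\rm ri}\,C$, fix $x_0 = \sum_i \lambda_i^0 u_i$ with every $\lambda_i^0 > 0$ and take an arbitrary $x = \sum_i \mu_i u_i \in C$, $\mu_i \ge 0$. Then $x_0 - \varepsilon(x - x_0) = \sum_i \big[(1+\varepsilon)\lambda_i^0 - \varepsilon \mu_i\big]\,u_i$, and since $(1+\varepsilon)\lambda_i^0 - \varepsilon\mu_i \to \lambda_i^0 > 0$ as $\varepsilon \downarrow 0$ for each $i$, a common $\varepsilon > 0$ keeps all coefficients nonnegative, so $x_0 - \varepsilon(x-x_0) \in C$. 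The criterion recalled after Definition 2.2 then gives $x_0 \in {\rm ri}\,C$.

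For ${\rm ri}\,C \subset S$, fix $x_0 \in {\rm ri}\,C$ and apply that same criterion against the particular point $s := \sum_{i=1}^p u_i \in C$: there is $\varepsilon > 0$ with $x_0 - \varepsilon(s - x_0) = (1+\varepsilon)x_0 - \varepsilon s \in C$. Writing this element as $\sum_i \gamma_i u_i$ with $\gamma_i \ge 0$ and adding $\varepsilon s = \sum_i \varepsilon u_i$ to both sides yields $(1+\varepsilon)x_0 = \sum_i (\gamma_i + \varepsilon)\, u_i$, whence $x_0 = \sum_i \frac{\gamma_i + \varepsilon}{1+\varepsilon}\,u_i$ with all coefficients strictly positive. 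Thus $x_0 \in S$, and together with the previous inclusion this proves \eqref{reps_riC_eq}.

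The genuinely delicate point is the first paragraph: one must verify that the relative interior is insensitive to the ambient space, i.e. that ${\rm cl(aff}\,C)$ equals the finite-dimensional subspace $M$ and that the subspace topology it inherits from $X$ is its usual topology, so that the finite-dimensional characterization of ${\rm ri}$ is genuinely available here. Once this reduction is secured, the two inclusions are the standard strictly-positive-combination argument for finitely generated cones and present no real difficulty.
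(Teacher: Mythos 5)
Your proof is correct and takes essentially the same route as the paper: both reduce to the finite-dimensional span of $u_1,\dots,u_p$, invoke the criterion $u\in{\rm ri}\,C$ iff for every $x\in C$ there is $\varepsilon>0$ with $u-\varepsilon(x-u)\in C$, test against $\sum_i u_i$ for one inclusion, and adjust coefficients by $\varepsilon$ for the other. Your first paragraph merely makes explicit the reduction that the paper asserts in one line; the algebra in the two inclusions is identical.
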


\begin{proof} Let $X_0:={\rm{span}}\{u_1,\dots,u_p\}$ be the linear subspace generated by the vectors $u_1,\dots,u_p$. As $C$ is a convex cone of $X_0$ which is a space of finite dimension,  $u \in {\rm ri}C$ if and only if, for every $x \in C$, there exists $\varepsilon >0$ such that $u-\varepsilon(x-u) \in C$. Given any $u \in {\rm ri}C$, we will show that $u$ belongs to the right-hand-side of \eqref{reps_riC_eq}. Let $x=\sum\limits_{i=1}^p u_i \in C$ and let $\varepsilon >0$ be such that $v:=u-\varepsilon(x-u)$ belongs to $C$. Suppose that $v= \sum\limits_{i=1}^p \alpha_i u_i$, where $\alpha_i \geq 0$ for $i=1,\dots,p$. It is clear that
			\begin{equation*}
			u=\frac{1}{1+\varepsilon}v+\frac{\varepsilon}{1+\varepsilon}x\\
			=\sum\limits_{i=1}^{p}\left(\frac{\alpha_i}{1+\varepsilon}+\frac{\varepsilon}{1+\varepsilon} \right)u_i= \sum\limits_{i=1}^{p} \lambda_i u_i,
			\end{equation*}
where $\lambda_i:=\frac{\alpha_i}{1+\varepsilon}+\frac{\varepsilon}{1+\varepsilon} >0$, $i=1,\dots,p$. This establishes the inclusion ``$\subset$'' in \eqref{reps_riC_eq}. 
			
Now, let $u$ be an arbitrary element from the set on the right-hand-side of \eqref{reps_riC_eq}. Suppose that $u=\sum\limits_{i=1}^{p} \lambda_i u_i$, where $\lambda_i>0$ for all  $i=1,\dots,p$. For any $x \in C$, one can choose $\alpha_1 \geq 0, \dots, \alpha_p \geq 0$, satisfying $x=\sum\limits_{i=1}^{p} \alpha_i u_i$. Put
			\begin{equation*}
			v_{\varepsilon}=u-\varepsilon(x-u)=\sum\limits_{i=1}^{p} \big(\lambda_i -\varepsilon(\alpha_i - \lambda_i) \big)u_i, 
			\end{equation*}
where $\varepsilon >0$. As $\lambda_i >0$ for all  $i=1,\dots,p$, we can find an $\varepsilon >0$ satisfying $\lambda_i -\varepsilon(\alpha_i - \lambda_i) \geq 0$ for all $i=1,\dots,p$. Hence, for this $\varepsilon$, we have $v_{\varepsilon} \in C$. The inclusion ``$\supset$'' in \eqref{reps_riC_eq} has been proved.
\end{proof}	
			
\medskip
Let $Y$ be a locally convex Hausdorff topological vector space. Suppose that $K \subset Y$ is a polyhedral convex cone defined by
			\begin{equation}\label{reps_K}
			K=\Big\{ y \in Y \mid \langle y^*_j, y \rangle \leq 0,\ j=1,\dots,q  \Big\}, 
			\end{equation}
where $y^*_j \in Y^*\setminus\{0\}$ for all $j=1,\dots,q$. Define $\ell(K)=K \cap (-K)$. It is clear that
			\begin{equation*}
			\ell(K)=\big\{ y \in Y \mid \langle y^*_j, y \rangle = 0,\ j=1,\dots,q  \big\}.
			\end{equation*}
			
The first assertion of the following proposition describes the interior of a polyhedral convex cone.
					
\begin{proposition}\label{reps_intK}  Let $K \subset Y$ be a polyhedral convex cone of the form \eqref{reps_K}. The following are valid:
			\begin{description}
					\item{\rm (a)} The interior of $K$ has the represention 
							\begin{equation}\label{reps_intK_eq}
									{\rm int}K=\big\{ y \in Y \mid \langle y^*_j, y \rangle < 0,\ j=1,\dots,q  \big\}.
							\end{equation}
					\item{\rm (b)} The set $K \setminus{\ell(K)}$ is a convex cone and
							\begin{equation}\label{reps_1b}
					K \setminus{\ell(K)}=\Big\{ y \in K \mid \text{there exists $j \in \{1,\dots,q\}$ such that } \langle y^*_j, y \rangle< 0  \Big\}.
							\end{equation}	
			\end{description}
\end{proposition}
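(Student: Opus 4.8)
The plan is to dispatch the purely set-theoretic identity \eqref{reps_1b} in part (b) first, and then treat the two genuinely structural claims: the interior formula \eqref{reps_intK_eq} and the convex-cone property of $K\setminus\ell(K)$.

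For part (a), I would prove the two inclusions in \eqref{reps_intK_eq} separately. The inclusion ``$\supseteq$'' is the easy direction: since each $y^*_j$ is continuous, the set $\{ y \in Y \mid \langle y^*_j, y\rangle < 0,\ j=1,\dots,q\}$ is a finite intersection of open half-spaces, hence an open subset of $Y$ that is contained in $K$, and therefore contained in ${\rm int}\,K$. The inclusion ``$\subseteq$'' is where the topological vector space structure really enters, and I expect the step below to be the only non-routine point in the whole proof. I would argue by contradiction: suppose $y \in {\rm int}\,K$ but $\langle y^*_{j_0}, y\rangle = 0$ for some index $j_0$. Since $y^*_{j_0} \neq 0$, choose $d \in Y$ with $\langle y^*_{j_0}, d\rangle > 0$. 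Because $y$ is interior, $y + V \subset K$ for some neighborhood $V$ of the origin; as every neighborhood of $0$ in a topological vector space is absorbing, $td \in V$ for all sufficiently small $t>0$, whence $y + td \in K$. But $\langle y^*_{j_0}, y + td\rangle = t\langle y^*_{j_0}, d\rangle > 0$ contradicts $y+td \in K$. This forces $\langle y^*_j, y\rangle < 0$ for every $j$.

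For part (b), the identity \eqref{reps_1b} is purely logical: $y \in K$ means $\langle y^*_j, y\rangle \le 0$ for all $j$, while $y \notin \ell(K)$ means it is not the case that all these values vanish, i.e. $\langle y^*_{j_0}, y\rangle \ne 0$ for some $j_0$; combined with $\langle y^*_{j_0}, y\rangle \le 0$, this forces $\langle y^*_{j_0}, y\rangle < 0$, and conversely. It then remains to verify that the right-hand side is a convex cone. The cone property is immediate, since $K$ is a cone and scaling by $\lambda > 0$ preserves the strict sign via $\langle y^*_{j_0}, \lambda y\rangle = \lambda\langle y^*_{j_0}, y\rangle < 0$. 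For convexity, given $y, y'$ in the set and $t \in (0,1)$, I would select an index $j_1$ with $\langle y^*_{j_1}, y\rangle < 0$; since $y' \in K$ gives $\langle y^*_{j_1}, y'\rangle \le 0$, the convex combination satisfies $\langle y^*_{j_1}, ty+(1-t)y'\rangle = t\langle y^*_{j_1}, y\rangle + (1-t)\langle y^*_{j_1}, y'\rangle < 0$, so that $ty+(1-t)y' \in K\setminus\ell(K)$, the endpoints $t \in \{0,1\}$ being trivial.
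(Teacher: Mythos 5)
Your proposal is correct and follows essentially the same route as the paper: the ``$\supseteq$'' inclusion in (a) via openness, the ``$\subseteq$'' inclusion by perturbing an interior point with $\langle y^*_{j_0},\cdot\rangle$ vanishing (the paper shows $y^*_{j_0}$ annihilates a balanced neighborhood and then contradicts $y^*_{j_0}\neq 0$ via absorption, while you move directly along a direction $d$ with $\langle y^*_{j_0},d\rangle>0$ --- a cosmetic difference only), and part (b) by the same logical identity plus the single-index estimate $\langle y^*_{j_1}, ty+(1-t)y'\rangle \le t\langle y^*_{j_1},y\rangle<0$. No gaps.
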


\begin{proof} (a)  As $ \big\{ y \in Y \mid \langle y^*_j, y \rangle < 0,\ j=1,\dots,q  \big\} $ is an open subset of $K$, we have the inclusion ``$\supset$'' in \eqref{reps_intK_eq}. Now, to obtain the reverse inclusion, arguing by contradiction, we suppose that there exists $\bar{y} \in {\rm int}K$ for which there is $j_1 \in \{1,\dots,q\}$ such that $  \langle y^*_{j_1},  \bar{y} \rangle=0$. Since $\bar{y} \in {\rm int}K$, one can find a balanced neighborhood $V \subset Y$ of 0 satisfying $\bar{y}+V \subset K$. Then we have
			\begin{equation*}
			0 \geq \langle y^*_{j_1},  \bar{y} +v \rangle= \langle y^*_{j_1}, v \rangle
			\end{equation*}
for all $v \in V$.  It follows that $\langle y^*_{j_1}, v \rangle = 0$ for all $v \in V$. As $y^*_{j_1} \neq 0$,  there exists $y \in Y$ with $\langle y^*_{j_1}, y \rangle \not=0$. Since $ty \in V$ for sufficiently small $t>0$, we get $\langle y^*_{j_1}, y \rangle=0$, a~contradiction.  Thus, we have proved the inclusion ``$\subset$'' in \eqref{reps_intK_eq}.
			
\medskip
(b) Clearly, $y \in K \setminus{\ell(K)}$ if and only if   $\langle y^*_j, y \rangle \leq~0$ for all $ j=1,\dots,q,$ and there exists $j \in \{1,\dots,q\}$ such that $\langle y^*_j, y \rangle< 0$.  \eqref{reps_1b} holds true. The fact that $K \setminus{\ell(K)}$ is a cone is obvious. Hence to show that $K \setminus{\ell(K)}$ is convex, we take any $u, v \in K \setminus{\ell(K)}$ and $\lambda \in (0,1)$. By the convexity of $K$, $\lambda u + (1-\lambda) v \in K$. As $u \in  K \setminus{\ell(K)}$, one can find an index $j_0 \in \{1,\dots,q\}$ such that $ \langle y^*_{j_0}, u \rangle< 0 $. Since
			\begin{equation*}
			\langle y^*_{j_0}, \lambda u + (1-\lambda) v \rangle = \lambda \langle y^*_{j_0}, u \rangle + (1-\lambda) \langle y^*_{j_0}, v \rangle \leq \lambda \langle y^*_{j_0}, u \rangle  < 0,
			\end{equation*}
we have $\lambda u + (1-\lambda) v \in K \setminus{\ell(K)}$. 
\end{proof}
			
\begin{remark}\label{col_reps_intK}{\rm From Proposition \ref{reps_intK} it follows that ${\rm int}K \subset K \setminus{\ell(K)}$. The last inclusion can be strict. To see this, choose $Y=\mathbb{R}^n$ and $K=\mathbb{R}^n_+$.}
\end{remark}
			
To proceed furthermore, we put $Y_0=\big\{ y \in Y \, \mid \,  \langle y^*_j, y \rangle =0,\ j=1,\dots,q \big\}$ and note that $\ell(K)=Y_0$. Because $Y_0$ is a closed linear subspace of finite codimension of~$Y$, there exists a finite-dimensional linear subspace $Y_1$ of $Y$, such that $Y=Y_0 + Y_1$ and $Y_0 \cap Y_1=\{0\} $.  By \cite[Theorem 1.21(b)]{Rudin_1991}, $Y_1$ is closed. Clearly, 
	\begin{equation*}
	K_1:=\big\{ y \in Y_1 \mid \langle y^*_j, y \rangle \leq 0, \; j=1,\dots,q\big\}
	\end{equation*}
is a pointed polyhedral convex cone in $Y_1$ and $K=Y_0+K_1$. 

\medskip
For the case $Y=\mathbb{R}^n$, a result similar to the following one was given in \cite[Lemma~2.6,~p. 89]{Luc}.
\begin{lemma}\label{decomp_K_diff_lK} It holds that 
				\begin{equation}\label{decomp_for_K_diff_lK_eq}
				K \setminus \ell(K)=Y_0+K_1 \setminus \{0\}.
				\end{equation}	
\end{lemma}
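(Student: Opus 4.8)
The plan is to exploit the two facts established just before the statement—that $K=Y_0+K_1$ and that $\ell(K)=Y_0$—together with the direct-sum splitting $Y=Y_0\oplus Y_1$ with $Y_0\cap Y_1=\{0\}$. Because of this splitting, every $y\in Y$ has a \emph{unique} representation $y=y_0+y_1$ with $y_0\in Y_0$ and $y_1\in Y_1$. The crucial observation is that, since $Y_0=\bigcap_{j}\ker y_j^*$, we have $\langle y_j^*, y\rangle=\langle y_j^*, y_1\rangle$ for every $j$; that is, the functionals defining $K$ see only the $Y_1$-component of $y$. In particular $y\in K$ if and only if $y_1\in K_1$. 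I would record these remarks first and then prove the two inclusions of \eqref{decomp_for_K_diff_lK_eq} separately.

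For the inclusion ``$\supset$'', I would take $y=y_0+y_1$ with $y_0\in Y_0$ and $y_1\in K_1\setminus\{0\}$. Then $y\in Y_0+K_1=K$, and it remains to check $y\notin\ell(K)=Y_0$. Since $y_1\neq 0$ and $Y_0\cap Y_1=\{0\}$, we have $y_1\notin Y_0$, so $\langle y_{j}^*, y_1\rangle\neq 0$ for some index $j$; combined with $y_1\in K_1$, which forces $\langle y_{j}^*, y_1\rangle\leq 0$, this yields $\langle y_{j}^*, y_1\rangle<0$. Hence $\langle y_{j}^*, y\rangle=\langle y_{j}^*, y_1\rangle<0$, so $y\notin Y_0=\ell(K)$ and therefore $y\in K\setminus\ell(K)$.

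For the reverse inclusion ``$\subset$'', I would take $y\in K\setminus\ell(K)$ and write $y=y_0+y_1$ with $y_0\in Y_0$ and $y_1\in Y_1$. From $y\in K$ and the observation above, $y_1\in K_1$. If $y_1$ were $0$, then $y=y_0\in Y_0=\ell(K)$, contradicting $y\notin\ell(K)$; hence $y_1\in K_1\setminus\{0\}$ and $y=y_0+y_1\in Y_0+\big(K_1\setminus\{0\}\big)$, as required.

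The main obstacle—indeed essentially the only delicate point—is verifying that a nonzero element of $K_1$ cannot lie in $\ell(K)$. This is exactly where the pointedness of $K_1$ is used, which is equivalent to the condition $Y_0\cap Y_1=\{0\}$ built into the choice of the complement $Y_1$. That condition guarantees that the $Y_1$-component of any point of $K\setminus\ell(K)$ is genuinely nonzero, so that at least one of the defining constraints $\langle y_j^*,\cdot\rangle\leq 0$ holds with strict inequality; everything else is bookkeeping with the unique decomposition.
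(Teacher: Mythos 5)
Your proof is correct and follows essentially the same route as the paper: decompose each point via $K=Y_0+K_1$, use that the functionals $y_j^*$ annihilate $Y_0$ so that membership in $K$ and in $\ell(K)$ is detected on the $Y_1$-component, and extract a strict inequality $\langle y_j^*,y_1\rangle<0$ from $y_1\in K_1\setminus\{0\}$ and $Y_0\cap Y_1=\{0\}$. The only cosmetic difference is in the inclusion ``$\subset$'', where you argue $y_1\neq 0$ directly from $\ell(K)=Y_0$ and the uniqueness of the splitting, while the paper first invokes Proposition~\ref{reps_intK}(b) to get a strictly negative functional value; both are valid.
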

		
\begin{proof} For each $v \in K \setminus \ell(K)$, there exist $v_0 \in Y_0$ and $v_1 \in K_1$ satisfying $v=~v_0+~v_1$. By Proposition \ref{reps_intK}, one can find $j_0 \in \{1,\dots,q\}$ such that $\langle y^*_{j_0}, v \rangle < 0$. Since $\langle y^*_{j_0}, v_1 \rangle =   \langle y^*_{j_0}, v \rangle - \langle y^*_{j_0}, v_0 \rangle <0$,  so $v_1$ is non zero. Hence $v=v_0+v_1 \in Y_0 +  K_1 \setminus \{0\}$. We have shown that $K \setminus \ell(K) \subset Y_0+K_1 \setminus \{0\}$.
			
To obtain \eqref{decomp_for_K_diff_lK_eq}, take any $v=v_0+v_1$ with $v_0 \in Y_0$ and $v_1 \in K_1 \setminus \{0\}$. Then $v \in Y_0+K_1=K$.  As $Y_0 \cap \big(Y_1 \setminus \{0\} \big)  =\emptyset$, we must have $v_1 \not \in Y_0$. Choose $j_1~\in~\{1,\dots,q\}$ such that $\langle y^*_{j_1}, v_1 \rangle <0$.  Since $\langle y^*_{j_1}, v \rangle =   \langle y^*_{j_1}, v_0 \rangle + \langle y^*_{j_1}, v_1 \rangle <0$,  we see that $v \in K \setminus \ell(K)$.
\end{proof}

\medskip
By \cite[Proposition 2.42] {Bonnans_Shapiro_2000}, we can represent the positive dual cone 
			\begin{equation*}
			K^*:= \big\{ y^* \in Y^* \mid \langle y^*, y \rangle \geq 0 \ \; \forall y \in K   \big\}
			\end{equation*}
of $K$ as
			\begin{equation}\label{represent_Kstar}
			K^*=\left\{\sum\limits_{j=1}^q \lambda_j (-y^*_j) \mid \lambda_j \geq 0, j=1,\dots,q  \right\}.
			\end{equation}
			
\begin{lemma}\label{Kstar_property}
				If $y^* \in K^*$ then $\langle y^*, y \rangle =0$ for all $y \in Y_0$.
\end{lemma}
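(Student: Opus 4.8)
The plan is to exploit the observation, recorded just before the statement, that $Y_0$ coincides with the lineality space $\ell(K)=K\cap(-K)$. This reduces the claim to the bare definition of the dual cone $K^*$ and requires no appeal to the explicit representation \eqref{represent_Kstar}.

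First I would fix an arbitrary $y^*\in K^*$ and an arbitrary $y\in Y_0$. Since $Y_0=\ell(K)=K\cap(-K)$, both $y$ and $-y$ belong to $K$. By the definition of $K^*$, evaluating $y^*$ at the point $y\in K$ gives $\langle y^*, y\rangle\geq 0$, while evaluating $y^*$ at the point $-y\in K$ gives $\langle y^*, -y\rangle\geq 0$, that is, $\langle y^*, y\rangle\leq 0$. Combining the two inequalities forces $\langle y^*, y\rangle=0$, which is exactly the assertion.

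Alternatively, one can argue through the representation \eqref{represent_Kstar}: writing $y^*=\sum_{j=1}^q\lambda_j(-y^*_j)$ with $\lambda_j\geq 0$ and using that $\langle y^*_j, y\rangle=0$ for every $j$ (this holds because $y\in Y_0$ is a common zero of the functionals $y^*_j$), one obtains at once $\langle y^*, y\rangle=-\sum_{j=1}^q\lambda_j\langle y^*_j, y\rangle=0$. There is no genuine obstacle here; the statement is an immediate consequence of $Y_0$ being the lineality space of $K$, and the only real decision is which of the two equivalent descriptions of $Y_0$ --- as $K\cap(-K)$, or as the common kernel of the $y^*_j$ --- to invoke. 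I would present the first argument, as it is the shortest and dispenses with \eqref{represent_Kstar} entirely.
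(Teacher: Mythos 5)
Your primary argument is correct and is exactly the paper's proof: since $y\in Y_0=\ell(K)=K\cap(-K)$, both $y$ and $-y$ lie in $K$, so the definition of $K^*$ gives $\langle y^*,y\rangle\geq 0$ and $\langle y^*,-y\rangle\geq 0$, hence equality. No further comment is needed.
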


\begin{proof} If $y^* \in K^*$ then, for any $y \in Y_0$, one has $\langle y^*, y \rangle  \geq 0$ and $\langle y^*, -y \rangle  \geq 0$; hence $\langle y^*, y \rangle =0$. 
\end{proof}
			
\medskip
Now we are in position to describe the relative interior of the dual cone $K^*$ by using the set $K\setminus{\ell(K)}$, which can be computed by \eqref{reps_1b}.  
\begin{theorem}\label{decomp_riKstar}
				If $K$ is not a linear subspace of $Y$,	then a vector $y^* \in Y^*$ belongs to ${\rm ri}K^*$ if and only if $\langle y^*, y \rangle >0$ for all $y \in K\setminus{\ell(K)}$. 
\end{theorem}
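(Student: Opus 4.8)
The plan is to combine the explicit description of the relative interior of a finitely generated cone (Theorem~\ref{reps_riC}) with the dual-cone representation \eqref{represent_Kstar}. Applying Theorem~\ref{reps_riC} to $K^*$, whose generators are the vectors $-y^*_1,\dots,-y^*_q$ appearing in \eqref{represent_Kstar}, I immediately obtain
\begin{equation*}
{\rm ri}\,K^*=\Big\{ \sum_{j=1}^q \lambda_j(-y^*_j) \mid \lambda_j>0,\ j=1,\dots,q \Big\}.
\end{equation*}
So the whole statement reduces to showing that this set coincides with $S:=\{ y^*\in Y^* \mid \langle y^*,y\rangle>0\ \text{for all}\ y\in K\setminus\ell(K)\}$. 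Observe that the hypothesis that $K$ is not a linear subspace guarantees $K_1\neq\{0\}$, hence $K\setminus\ell(K)\neq\emptyset$ by \eqref{decomp_for_K_diff_lK_eq}, so that $S$ is a genuine restriction on $y^*$ (without this hypothesis the condition defining $S$ would be vacuous).

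For the inclusion ${\rm ri}\,K^*\subset S$, which is the easy half, I would take $y^*=\sum_j\lambda_j(-y^*_j)$ with all $\lambda_j>0$ and an arbitrary $y\in K\setminus\ell(K)$. Since $y\in K$ we have $\langle y^*_j,y\rangle\le0$ for every $j$, while Proposition~\ref{reps_intK}(b), through \eqref{reps_1b}, supplies an index $j_0$ with $\langle y^*_{j_0},y\rangle<0$. Then
\begin{equation*}
\langle y^*,y\rangle=\sum_{j=1}^q\lambda_j\big(-\langle y^*_j,y\rangle\big)\ge \lambda_{j_0}\big(-\langle y^*_{j_0},y\rangle\big)>0,
\end{equation*}
which is exactly $y^*\in S$.

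The hard part is the reverse inclusion $S\subset{\rm ri}\,K^*$. Here I would first check that any $y^*\in S$ lies in $K^*$: for $y\in\ell(K)=Y_0$, fixing some $w\in K_1\setminus\{0\}$, the points $y+\tfrac1n w$ lie in $K\setminus\ell(K)$ by \eqref{decomp_for_K_diff_lK_eq}, so $\langle y^*,y+\tfrac1n w\rangle>0$, and letting $n\to\infty$ yields $\langle y^*,y\rangle\ge0$; together with positivity on $K\setminus\ell(K)$ this gives $y^*\in K^*$. The substantive step is then to exclude $y^*\in K^*\setminus{\rm ri}\,K^*$. Since $K^*$ annihilates $Y_0$ (Lemma~\ref{Kstar_property}) and $Y_0$ has finite codimension, $K^*$ is contained in the finite-dimensional space $Y_0^{\perp}$, which the restriction map identifies linearly with $Y_1^*$; under this identification $K^*$ corresponds to the dual cone $K_1^*$ of the pointed cone $K_1$, and relative interiors are preserved. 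Thus $y^*\notin{\rm ri}\,K^*$ would make $\tilde y^*:=y^*|_{Y_1}$ a relative-boundary point of $K_1^*$, and since $K_1$ is pointed, $K_1^*$ is full-dimensional in $Y_1^*$, so $\tilde y^*\in\partial K_1^*$. A supporting hyperplane to $K_1^*$ at $\tilde y^*$, together with the bipolar identity $K_1^{**}=K_1$ valid in the finite-dimensional space $Y_1$, then produces a vector $y_1\in K_1\setminus\{0\}$ with $\langle\tilde y^*,y_1\rangle=0$, whence $\langle y^*,y_1\rangle=\langle\tilde y^*,y_1\rangle=0$. But $y_1\in K_1\setminus\{0\}\subset K\setminus\ell(K)$ by \eqref{decomp_for_K_diff_lK_eq}, contradicting $y^*\in S$.

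I expect the main obstacle to be precisely this reverse inclusion: converting the pointwise positivity condition into membership in the relative interior is not a purely algebraic manipulation of the representation \eqref{represent_Kstar}, since a single $y^*$ may admit many conic representations. The finite-dimensional reduction — passing to the pointed cone $K_1$ and invoking $K_1^{**}=K_1$ to construct the separating vector $y_1$ inside $K$ — is what carries the argument; in the original (possibly infinite-dimensional) space $Y$ there is no direct way to produce such a $y_1\in K$, which is the very reason the decomposition $K=Y_0+K_1$ and Lemma~\ref{decomp_K_diff_lK} were set up beforehand. This is the mechanism by which the finite-dimensional fact recalled just before the statement gets lifted to the present setting.
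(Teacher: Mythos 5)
Your proof is correct, and while the easy half (necessity) coincides with the paper's argument --- both of you combine Theorem~\ref{reps_riC} with \eqref{represent_Kstar} to write $y^*=\sum_j\lambda_j(-y^*_j)$ with $\lambda_j>0$ and then invoke \eqref{reps_1b} --- the hard half (sufficiency) runs along a genuinely different track. The paper, after establishing $y^*\in K^*$ (by contradiction, whereas you pass to the limit along $y+\tfrac1n w$; both work), certifies membership in ${\rm ri}\,K^*$ \emph{directly} via the radial criterion recalled after Definition~2.2: it takes the finitely many generators $v_i$ of $K_1$ supplied by Rockafellar's Theorem~19.1, notes $\langle y^*,v_i\rangle>0$ because $v_i\in K\setminus\ell(K)$, and checks that the perturbation $v^*_\varepsilon=y^*-\varepsilon(\widetilde y^*-y^*)$ stays in $K^*$ for small $\varepsilon$, using Lemma~\ref{Kstar_property} to kill the $Y_0$-component of an arbitrary $y\in K$. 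You instead argue by contradiction on the dual side: identifying $K^*\subset Y_0^{\perp}={\rm span}\{y^*_1,\dots,y^*_q\}$ with the full-dimensional cone $K_1^*$ in $Y_1^*$ (full-dimensionality being exactly the dual expression of the pointedness of $K_1$), you use a supporting hyperplane at a boundary point together with $K_1^{**}=K_1$ to manufacture a nonzero $y_1\in K_1\subset K\setminus\ell(K)$ annihilated by $y^*$. Both arguments rest on the same preparatory decomposition $K=Y_0+K_1$ and Lemma~\ref{decomp_K_diff_lK}. The paper's route is more elementary and self-contained (no bipolar theorem, no supporting hyperplanes, no identification of dual spaces to justify); yours is more conceptual in that it isolates \emph{why} the hypothesis that $K$ is not a linear subspace is needed --- it makes $K_1^*$ full-dimensional, so relative boundary points of $K^*$ admit nonzero separating vectors inside $K_1$ --- at the cost of having to verify that the restriction map $Y_0^{\perp}\to Y_1^*$ is a linear bijection carrying $K^*$ onto $K_1^*$ and preserving relative interiors (which it does, since everything happens in finite dimension).
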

			
\begin{proof}  {\it Necessity:} Suppose that $y^* \in {\rm ri}K^*$. By Theorem \ref{reps_riC} and formula \eqref{represent_Kstar}, there exist $\lambda_j >0$ for $j=1,\dots,q$, such that  $y^*=\sum\limits_{j=1}^q \lambda_j (-y^*_j)$. For any $y \in K\setminus{\ell(K)}$, by Proposition~\ref{reps_intK} one can find $j_0 \in \{1,\dots,q\}$ satisfying $\langle y^*_{j_0}, y \rangle< 0$. Then we have
			\begin{equation*}
			\begin{aligned}
			\langle y^*, y \rangle &= (-\lambda_{j_0}) \langle y^*_{j_0}, y \rangle + \sum\limits_{j=1, j\neq j_0}^q (-\lambda_j) \langle y^*_{j}, y \rangle\\
			&\geq (-\lambda_{j_0}) \langle y^*_{j_0}, y \rangle >0,
			\end{aligned}
			\end{equation*}
as derised.
			
{\it Sufficiency:} Suppose that $y^* \in Y^*$ and $\langle y^*, y \rangle >0$ for all $y \in K\setminus{\ell(K)}$. To show that $y^* \in K^*$, we assume the contrary: There exists $\bar{y} \in K$ with $\langle y^*, \bar{y} \rangle  <0$. Since $\langle y^*, y \rangle >0$ for all $y \in K\setminus{\ell(K)}$, this inequality forces $\bar{y} \in \ell(K) = Y_0$. Given any $y_1 \in K_1 \setminus\{0\}$,  by  \eqref{decomp_for_K_diff_lK_eq}, we have $t\bar{y}+y_1 \in K \setminus{\ell(K)}$ for every $t \in \mathbb{R}$.  As $\langle y^*, \bar{y} \rangle <0$, we can find $t >0$ such that 
			\begin{equation*}
			\langle y^*, t\bar{y}+y_1 \rangle =t\langle y^*, \bar{y}\rangle+\langle y^*, y_1 \rangle <0.
			\end{equation*}
This contradicts the hypothesis that $\langle y^*, y \rangle >0$ for all $y \in K\setminus{\ell(K)}$. Thus $y^* \in K^*$.
			
Since $K$ is not a linear supspace of $Y$, $K_1 \neq \{0\}$. By \cite[Theorem~19.1]{Rock_book_1970}, one can find $v_i \in Y_1 \setminus\{0\}, \, i=1,\dots,\ell,$ such that  
			\begin{equation*}
			K_1=\left\lbrace\sum\limits_{i=1}^{\ell} \mu_i v_i   \; \mid \; \mu_i \geq 0, \; \ i=1,\dots, \ell\right\rbrace. 
			\end{equation*}
Since $v_i \in K \setminus{\ell(K)}$ for $i=1,\dots,\ell$, by \eqref{decomp_for_K_diff_lK_eq}, it follows that
			\begin{equation}\label{y_star_v_i}
			\langle y^*, v_i  \rangle>0, \; i=1,\dots,\ell.
			\end{equation}
Take any $\widetilde{y}^* \in K^*$ and put $v^*_{\varepsilon}=y^* - \varepsilon (\widetilde{y}^*-y^*)$ with $\varepsilon >0$. By \eqref{y_star_v_i}, there exists $\varepsilon >0$ such that 
			\begin{equation}\label{v_star_v_i}
			\langle v^*_{\varepsilon}, v_i  \rangle>0,\; \  i=1,\dots,\ell.
			\end{equation}
As $K=Y_0+K_1$,  for every $y \in K$ one can find $y_0 \in Y_0$ and $\mu_i \geq 0$, $i=1,\dots,\ell,$ such that $y=y_0+\sum\limits_{i=1}^{\ell}\mu_i v_i$. Because $y^*, \widetilde{y}^* \in K^*$, by Lemma \ref{Kstar_property} one has $\langle y^*, y_0 \rangle =0$ and $\langle \widetilde{y}^*, y_0 \rangle =0$. Hence, from \eqref{y_star_v_i} it follows that
			\begin{equation*}
			\begin{aligned}
			\langle v_{\varepsilon}^*, y \rangle &= \left\langle v_{\varepsilon}^* , y_0+\sum\limits_{i=1}^{\ell}\mu_i v_i \right\rangle\\
			&=\sum\limits_{i=1}^{\ell}\mu_i  \langle  v^*_{\varepsilon}, v_i \rangle \geq 0.
			\end{aligned}
			\end{equation*} 
So we have $ \langle v_{\varepsilon}^*, y \rangle \geq 0$ for every $y \in K$. This means that $v_{\varepsilon}^* \in K^*$. We have thus  proved that, for  any $\widetilde{y}^* \in K^*$, there exists $\varepsilon >0$ such that $y^*-\varepsilon(\widetilde{y}^*-y^*) \in K^*$. Since $K^*$ is a convex cone in the finite dimensional space ${\rm span}\{y^*_1,\dots,y_q^*\}$, by \cite[Theorem~6.4]{Rock_book_1970}, we can infer that $y^* \in {\rm ri}K^*$.   
\end{proof}
			
\begin{remark}{\rm
If $K=Y_0$, then $K^*=Y_0^{\perp}$ where  
					\begin{equation*}
					Y_0^{\perp}:=\{y^* \in Y^* \mid \langle y^*, y \rangle = 0, \ \forall y \in Y_0\}
					\end{equation*}
is the annihilator of $Y_0$. So we have ${\rm ri}K^*=K^*$. 
}\end{remark}
				
\section{Efficient Solutions}
\markboth{\centerline{\it Efficient Solutions}}{\centerline{\it N.N.~Luan}} \setcounter{equation}{0}  				
				
Following \cite{Luan_Yen_2015}, we consider a {\it generalized linear vector optimization problem}   
\begin{equation*}
				{\rm (VLP)} \qquad \quad {{\rm min}_K} \big\{Mx \mid x \in D\big\},
\end{equation*}
where $M: X \rightarrow Y$ is a continuous linear mapping between locally convex Hausdorff topological vector spaces, $D \subset X$ a generalized polyhedron, $K \subset Y$ a polyhedral convex cone of the form \eqref{reps_K}. 
				
\medskip
A vector $u \in D$ is said to be an \textit{efficient solution} (resp.,  a \textit{weakly efficient solution}) of {\rm (VLP)}  if there does not exist any $x \in D$ such that $Mu - Mx \in K\setminus{\ell(K)}$  (resp.,  $Mu - Mx \in {\rm int}K$). The set of all the efficient solutions (resp.,  \textit{weakly efficient solution}) is denoted by $E$ (resp.,  $E^w$). 
				
Clearly, when $K$ is a pointed cone, i.e., $\ell(K)=\left\{ 0 \right\}$, then $u \in E$ if and only if there does not exist any $x \in D$ with $Mu - Mx \in K\setminus\{0\}$.  
				
\begin{remark}{\rm As ${\rm int} K \subset K\setminus{\ell(K)}$ by Remark \ref{col_reps_intK}, we have $E \subset E^w$.}
\end{remark}
				
Now, by a standard scalarization scheme in vector optimization, we consider the scalar problems
				\begin{equation*}
				{\rm (LP)}_{y^*} \qquad \min \Big\{\langle y^*, Mx \rangle \mid x \in D\Big\} \quad (y^* \in Y^*).
				\end{equation*}
				
\medskip
Let $\pi_0: Y \rightarrow Y/Y_0$, $y \mapsto y+Y_0$ for all $y \in Y$, be the canonical projection from $Y$ on the quotient space $Y/Y_0$. It is clear that the operator $\Phi_0: Y/Y_0 \rightarrow Y_1$, $[y_1]:=y_1 +Y_0 \mapsto y_1$ for all $y_1 \in Y_1$, is a linear bijective mapping. By \cite[Theorem~1.41({\it a})]{Rudin_1991}, $\pi_0$ is a linear continuous mapping. Moreover, $\Phi_0$ is a homeomorphism by \cite[Lemma~2.5]{Luan_Yen_2015}. Hence, the operator $\pi:=\Phi_0\circ \pi_0: Y \rightarrow Y_1$ is linear and continuous. Therefore, by Proposition \ref{image_of_D}, $D_1:=\left(\pi \circ M\right) (D)$ is a convex polyhedron in $Y_1$.
				
We now show how of checking the inclusion $u \in E$, for every $u \in D$, verification a relation in the finite dimensional space $Y_1$.  
				
\begin{proposition}\label{relative_eff}
For any $u \in D$, one has $u \in E$ if and only if 
					\begin{equation}\label{relative_eff_eq1}
					\big(\left(\pi \circ M\right) u- D_1\big)  \cap \big( K_1\setminus\{0\}\big) =~\emptyset.
					\end{equation}	
\end{proposition}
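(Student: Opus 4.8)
The plan is to reduce the (infinite-dimensional) efficiency condition defining $E$ to the finite-dimensional intersection condition \eqref{relative_eff_eq1} by transporting everything through the continuous linear projection $\pi$. Since $u \in E$ is by definition the negation of the existence of some $x \in D$ with $Mu - Mx \in K \setminus \ell(K)$, it suffices to prove the single equivalence: \emph{there exists $x \in D$ with $Mu - Mx \in K \setminus \ell(K)$ if and only if $\big((\pi \circ M)u - D_1\big) \cap (K_1 \setminus \{0\}) \neq \emptyset$}; negating both sides then yields the proposition.

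First I would record the two structural properties of $\pi = \Phi_0 \circ \pi_0$ that drive the argument. Because $\pi_0(y_1) = y_1 + Y_0$ and $\Phi_0(y_1 + Y_0) = y_1$ for every $y_1 \in Y_1$, the map $\pi$ restricts to the identity on $Y_1$; and because $\Phi_0$ is bijective, $\pi(y) = 0$ holds exactly when $\pi_0(y) = 0$, i.e. $\ker \pi = Y_0$. Thus $\pi$ is the continuous linear projection of $Y$ onto $Y_1$ along $Y_0$, and in particular $\pi \circ \pi = \pi$.

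The heart of the proof is the equivalence, for a fixed $x \in D$,
\[
Mu - Mx \in K \setminus \ell(K) \iff (\pi \circ M)u - (\pi \circ M)x \in K_1 \setminus \{0\}.
\]
For the forward implication I would invoke Lemma \ref{decomp_K_diff_lK}: write $Mu - Mx = y_0 + y_1$ with $y_0 \in Y_0$ and $y_1 \in K_1 \setminus \{0\}$, and apply $\pi$; since $\pi(y_0) = 0$ and $\pi(y_1) = y_1$, we get $(\pi \circ M)u - (\pi \circ M)x = y_1 \in K_1 \setminus \{0\}$. For the converse, set $z := Mu - Mx$ and suppose $\pi(z) \in K_1 \setminus \{0\}$. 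Because $\pi$ is a projection, $z - \pi(z) \in \ker \pi = Y_0$, so $z = (z - \pi(z)) + \pi(z) \in Y_0 + (K_1 \setminus \{0\})$, which equals $K \setminus \ell(K)$ by \eqref{decomp_for_K_diff_lK_eq}.

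Finally, as $x$ ranges over $D$, the vector $(\pi \circ M)x$ ranges over $D_1 = (\pi \circ M)(D)$, so the existence of some $x \in D$ with $(\pi \circ M)u - (\pi \circ M)x \in K_1 \setminus \{0\}$ is precisely the statement that $\big((\pi \circ M)u - D_1\big) \cap (K_1 \setminus \{0\}) \neq \emptyset$. Combining this with the displayed equivalence and negating completes the argument. The main obstacle is the converse direction of the displayed equivalence: one must recover the $Y_0$-component of $Mu - Mx$ and verify it genuinely lands in $Y_0$, which is exactly where the idempotence $\ker \pi = Y_0$ of the projection (rather than merely the linearity of $\pi$) is needed.
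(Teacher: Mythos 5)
Your proof is correct and follows essentially the same route as the paper: both directions rest on Lemma \ref{decomp_K_diff_lK} together with the observation that $\pi$ annihilates $Y_0$ and fixes $Y_1$ (equivalently, $\ker\pi=Y_0$ and $\pi|_{Y_1}=\mathrm{id}$). The paper merely packages the two implications as separate contradiction arguments instead of the single pointwise equivalence you state, so the mathematical content is identical.
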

				
\begin{proof} {\it Necessity:} Suppose the contrary that there is some $u \in E$ with 
				\begin{equation*}
				\left( u_1 - D_1\right)  \cap \big( K_1\setminus\{0\}\big) \neq~\emptyset,
				\end{equation*}	
where $u_1:=\left(\pi \circ M\right) u=\pi(Mu)$. Setting $u_0=Mu-u_1$, we have $Mu=u_0+u_1$ with $u_1=\pi(Mu) \in Y_1$. So $y_0 \in Y_0$. Select an element $v_1 \in K_1 \setminus\{0\}$ such that $v_1 \in u_1 - D_1$.  As $u_1-v_1 \in D_1=\left( \pi \circ M\right) (D)$, there exist $y \in D$ and $y_0 \in Y_0$ satisfying $My=y_0+(u_1-v_1)$. Since
				\begin{equation*}
				Mu-My = (u_0+u_1)-\big(y_0+(u_1-v_1)\big)=(u_0-y_0)+v_1 \in Y_0 + K_1 \setminus\{0\},
				\end{equation*}
by Lemma \ref{decomp_K_diff_lK}, we have $Mu-My \in  K \setminus{\ell(K)} $. This contradicts the assumption $u \in  E$. We have thus proved that if $u \in E$ then \eqref{relative_eff_eq1} holds. 
				
\medskip
{\it Sufficiency:} Ab absurdo, suppose that there exists $u \in D$ satisfying \eqref{relative_eff_eq1},  but $u \not \in E$. As $\big(Mu-M(D)\big) \cap  \big( K \setminus{\ell(K)} \big)  \neq \emptyset$, one can find $y \in D$ and $v \in K \setminus{\ell(K)}$ satisfying $Mu-My=v$. Invoking Lemma \ref{decomp_K_diff_lK}, we can assert that $v \in Y_0+K_1 \setminus\{0\}$, i.e., $v=v_0+v_1$ for some $v_0 \in Y_0$ and $v_1 \in K_1 \setminus\{0\}$. Then, from the equality $Mu-My=v_0+v_1$ we get
				\begin{equation*}
				\begin{aligned}
				\pi(Mu)-\pi(My)=\pi(Mu-My)&=\pi(v_0+v_1)\\	
				&=\pi(v_0)+\pi(v_1)=v_1.
				\end{aligned}
				\end{equation*}
It follows that $v_1 \in \Big( \pi(Mu)  -\pi(M(D))  \Big) \cap \Big( K_1 \setminus\{0\}\Big) $.  This is incompatible with \eqref{relative_eff_eq1}. The proof is complete. 
\end{proof}
				
\medskip
To make this exposition comprehensive, we how have a new look on a technical lemma of \cite{Luc} by giving another proof for it. 
				
\begin{lemma}\label{sclar_finite_dimen} {\rm (\cite[Lemma 2.6, p.~89]{Luc})} Suppose that $Z$ is a finite dimensional locally convex Hausdorff topological vector space. Let $A$ be a convex polyhedron containing~0 and $K \subset Z$ be a pointed polyhedral convex cone. If $A \cap\big(  K\setminus\{0\}\big)  =\emptyset$, then there exists $z^* \in Z^*$ such that 
					\begin{equation}\label{sclar_finite_dimen_eq1}
					\langle z^*, z  \rangle \leq 0 < \langle z^*,v \rangle
					\end{equation}    	
for all $z \in A$ and for any $v \in K\setminus\{0\}$.
\end{lemma}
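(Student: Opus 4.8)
The plan is to read the two desired inequalities as the requirement that a single functional $z^*$ lie simultaneously in two cones, and then to produce such a functional by strong separation in the finite-dimensional space $Z$. The condition $\langle z^*, z\rangle\le 0$ for all $z\in A$ says exactly that $z^*$ is nonpositive on the cone $C:=\{tz\mid t\ge 0,\ z\in A\}$ generated by $A$, since a functional is nonpositive on $A$ if and only if it is nonpositive on $C$. As $K$ is pointed, $\ell(K)=\{0\}$, and (when $K\neq\{0\}$) the second requirement $0<\langle z^*,v\rangle$ for all $v\in K\setminus\{0\}$ is, by Theorem~\ref{decomp_riKstar}, nothing but $z^*\in{\rm ri}\,K^*$; thus the lemma amounts to finding a functional that is nonpositive on $C$ and lies in ${\rm ri}\,K^*$. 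The case $K=\{0\}$ is trivial, with $z^*=0$, so I assume $K\neq\{0\}$.

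First I would check that $C$ is a genuine (closed) polyhedral convex cone and that $C\cap K=\{0\}$. Because $0\in A$, the representation in Theorem~\ref{Luan_Yen_2015_main} (in which $X_0$ plays the role of the lineality space) allows me to rewrite $C$ as the cone finitely generated by the vertices $u_i$, the recession generators $v_j$, and a basis of $X_0$ of $A$; hence $C$ is closed. A homogeneity argument then upgrades the hypothesis $A\cap(K\setminus\{0\})=\emptyset$ to $C\cap K=\{0\}$: if some $v=tz$ with $t>0$ and $z\in A$ belonged to $K\setminus\{0\}$, then $z=t^{-1}v\in K$ would give $z\in A\cap(K\setminus\{0\})$, a contradiction. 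Next I would manufacture a compact base of $K$. Using the finite generation of polyhedral cones (\cite[Theorem~19.1]{Rock_book_1970}), write $K=\big\{\sum_{i=1}^{\ell}\mu_i w_i\mid \mu_i\ge 0\big\}$ with every $w_i\neq 0$, and set $B:={\rm conv}\{w_1,\dots,w_\ell\}$. Pointedness forces $0\notin B$, for a relation $\sum\mu_i w_i=0$ with some $\mu_{i_0}>0$ would place the nonzero vector $\mu_{i_0}w_{i_0}=-\sum_{i\neq i_0}\mu_i w_i$ in $K\cap(-K)$. Thus $B$ is a nonempty compact convex set with $0\notin B$ and $B\subset K$, every nonzero element of $K$ has the form $tb$ with $t>0$ and $b\in B$, and, since $C\cap K=\{0\}$ and $0\notin B$, also $C\cap B=\emptyset$.

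Finally I would separate. As $C$ is closed convex, $B$ is compact convex, and $C\cap B=\emptyset$ in the finite-dimensional space $Z$, the strong separation theorem yields $z^*\in Z^*$ with $\sup_{z\in C}\langle z^*,z\rangle<\inf_{b\in B}\langle z^*,b\rangle$. Because $C$ is a cone containing $0$, the left-hand supremum is finite only if $\langle z^*,z\rangle\le 0$ for every $z\in C$, in which case it equals $0$; hence $\langle z^*,z\rangle\le 0$ for all $z\in A$ and $\delta:=\inf_{b\in B}\langle z^*,b\rangle>0$. For an arbitrary $v\in K\setminus\{0\}$, writing $v=tb$ with $t>0$ and $b\in B$ gives $\langle z^*,v\rangle=t\langle z^*,b\rangle\ge t\delta>0$, which is precisely \eqref{sclar_finite_dimen_eq1}. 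The step I expect to demand the most care is the passage to the cone $C$: one must use $0\in A$ to guarantee that $C$ is closed (closedness can fail otherwise), and then exploit the homogeneity of both $C$ and $K$ to turn the separation of $C$ from the compact base $B$ into the sharp inequality $\langle z^*,z\rangle\le 0<\langle z^*,v\rangle$ rather than a separation involving a nonzero threshold.
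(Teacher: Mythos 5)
Your proposal is correct and follows essentially the same route as the paper's proof: pass to the cone $C$ generated by $A$ (using $0\in A$ to ensure $C$ is finitely generated, hence closed), build a compact base $B$ of $K$ disjoint from $C$, apply strong separation, and exploit homogeneity to turn the separation into \eqref{sclar_finite_dimen_eq1}. The only cosmetic difference is that you take $B={\rm conv}\{w_1,\dots,w_\ell\}$ from a set of generators of $K$, whereas the paper slices $K$ with the hyperplane $\sum_{j}\langle z^*_j,z\rangle=-1$; both yield a compact base because $K$ is pointed.
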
	
				
\begin{proof}  Suppose that $K=\big\{ z \in Z \mid \langle z^*_j,z \rangle \leq 0,  \ j=1,\dots,q   \big\}$, where $z^*_j \in Z^*$ for $j=1,\dots,q.$ It is clear that 
				\begin{equation*}
				\begin{aligned}
				B:=\Bigg\{  z \in K \mid \sum_{j=1}^{q}\langle z^*_j,z \rangle =-1 \Bigg\}&& 
				\end{aligned}
				\end{equation*} 
is a compact convex polyhedron and $K \setminus\{0\} = \bigcup\limits_{t >0}(tB)$. According to \cite[Theorem~19.1]{Rock_book_1970}, there exist $u_i \in A$, $i=1,\dots,k$, $d_j \in Z$, $j=1,\dots,r,$ such that 
				\begin{equation*}
				\begin{aligned}
				A=\Bigg\{ \sum\limits_{i=1}^k \lambda_i u_i + \sum\limits_{j=1}^r \gamma_j d_j \mid &\,  \lambda_i \geq 0, \ \forall i=1,\dots,k,    \\ 
				&\sum\limits_{i=1}^k \lambda_i=1,\ \, \gamma_j \geq 0,\ \forall j=1,\dots,r   \Bigg\}.&& 
				\end{aligned}
				\end{equation*}
Since $0 \in A$, we must have $d_j \in A$ for $j=1,\dots,r$. Consider the cone
				\begin{equation*}
				\begin{aligned}
				C:=\Bigg\{ \sum\limits_{i=1}^k \lambda_i u_i + \sum\limits_{j=1}^r \gamma_j d_j \mid &\,  \lambda_i \geq 0, \ \forall i=1,\dots,k,    \gamma_j \geq 0,\ \forall j=1,\dots,r   \Bigg\}. 
				\end{aligned}
				\end{equation*}
Clearly, $C$ is a  pointed polyhedral convex cone and $A \subset C$. Note that, for any $z \in C$, there exist $u \in A$ and $\delta >0$ such that $z=\delta u$.  (Indeed, given any $z \in C$, one can find $\lambda_i \geq 0$, $i=1,\dots,k,$ and $\gamma_j \geq 0$, $j=1,\dots,r,$ such that $z= \sum\limits_{i=1}^k \lambda_i u_i + \sum\limits_{j=1}^r \gamma_j d_j$. If $\lambda_i=0$ for all $i=1,\dots,k$, then $z= \sum\limits_{j=1}^r \gamma_j d_j \in A$; hence we can choose $u=z$ and $\delta=1$. If there exists $\lambda_i >0$ then we choose $\delta = \sum\limits_{i=1}^k \lambda_i >0$ and $u=\frac{1}{\delta}z \in A$.) Since $A \cap (K\setminus\{0\}) =\emptyset$, by our assumptions,  we have $B \cap C =\emptyset$. Since $B, C$ are closed convex subsets of $Z$ and $B$ is compact,  by the strongly separation theorem  (see, e.g., \cite[Theorem 2.~14]{Bonnans_Shapiro_2000}), there exists $z^* \in Z^*$ such that 
				\begin{equation}\label{sep_C_and_B}
				\sup\limits_{z \in C} \langle z^*, z \rangle < \inf\limits_{b \in B} \langle z^*, b \rangle. 
				\end{equation}       
Since $0 \in C$, $0 \leq \sup\limits_{z \in C} \langle z^*, z \rangle$. Hence, from \eqref{sep_C_and_B} it follows that $0 < \inf\limits_{b \in B} \langle z^*, b \rangle$. Therefore, $ \langle z^*, v \rangle > 0$ for all $v \in K\setminus\{0\}$. For every $z \in C$, the inequality $\sup\limits_{t >0} \langle z^*, tz \rangle <~\inf\limits_{b \in B} \langle z^*, b \rangle$ forces $\langle z^*, z \rangle \leq 0$. Hence, \eqref{sclar_finite_dimen_eq1} is valid.
\end{proof}
				
\begin{theorem}\label{scalar_theorem}
If $K$ is not a linear subspace of $Y$, then $u \in Y$ is an efficient solution of {\rm (VLP)} if and only if there exists $y^* \in {\rm ri}K^*$ satisfying 	$u \in {\rm argmin} \left( {\rm (LP)}_{y^*}\right)$. In other words,
					\begin{equation}\label{repr_efficient_solution_set_1}
					E=\bigcup\limits_{y^* \in {\rm ri}K^*} {\rm argmin} \left( {\rm (LP)}_{y^*}\right).
					\end{equation}
\end{theorem}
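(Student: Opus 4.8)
The plan is to prove the two inclusions in \eqref{repr_efficient_solution_set_1} separately, handling the inclusion ``$\supset$'' first since it follows almost immediately from Theorem~\ref{decomp_riKstar}. Concretely, I would take $y^* \in {\rm ri}K^*$ and $u \in {\rm argmin}\left({\rm (LP)}_{y^*}\right)$ and argue by contradiction: if $u \notin E$, there is some $x \in D$ with $Mu - Mx \in K \setminus \ell(K)$, and Theorem~\ref{decomp_riKstar} then forces $\langle y^*, Mu - Mx \rangle > 0$, i.e. $\langle y^*, Mx \rangle < \langle y^*, Mu \rangle$, contradicting the minimality of $u$ for ${\rm (LP)}_{y^*}$. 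This direction needs neither the finite-dimensional reduction nor the pointedness machinery.

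For the harder inclusion ``$\subset$'', I would fix $u \in E$ and pass to the finite-dimensional space $Y_1$ via Proposition~\ref{relative_eff}. Writing $u_1 := (\pi \circ M)u$ and $A := u_1 - D_1$, one checks that $A$ is a convex polyhedron in $Y_1$ (because $D_1$ is, by Proposition~\ref{image_of_D}), that $0 \in A$ since $u_1 \in D_1$, and that $A \cap (K_1 \setminus \{0\}) = \emptyset$ by Proposition~\ref{relative_eff}. As $K_1$ is a pointed polyhedral convex cone in $Y_1$, the hypotheses of Lemma~\ref{sclar_finite_dimen} are met, and it produces a functional $z^* \in Y_1^*$ satisfying $\langle z^*, z \rangle \leq 0 < \langle z^*, v \rangle$ for all $z \in A$ and all $v \in K_1 \setminus \{0\}$.

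The decisive step is then to lift $z^*$ back to $Y^*$ correctly: I would set $y^* := z^* \circ \pi$, which is linear and continuous on $Y$, and show that the two defining inequalities for $z^*$ give exactly the two properties required of $y^*$. Taking $z = u_1 - (\pi \circ M)x \in A$ for arbitrary $x \in D$ yields $\langle z^*, (\pi \circ M)u \rangle \leq \langle z^*, (\pi \circ M)x \rangle$, that is, $\langle y^*, Mu \rangle \leq \langle y^*, Mx \rangle$ for all $x \in D$, so $u \in {\rm argmin}\left({\rm (LP)}_{y^*}\right)$. To obtain $y^* \in {\rm ri}K^*$ I would invoke Theorem~\ref{decomp_riKstar}: given any $y \in K \setminus \ell(K)$, Lemma~\ref{decomp_K_diff_lK} writes $y = y_0 + y_1$ with $y_0 \in Y_0$ and $y_1 \in K_1 \setminus \{0\}$, and since $\pi = \Phi_0 \circ \pi_0$ annihilates $Y_0$ and restricts to the identity on $Y_1$, one gets $\pi(y) = y_1$ and hence $\langle y^*, y \rangle = \langle z^*, y_1 \rangle > 0$, which is precisely the characterization of ${\rm ri}K^*$ in Theorem~\ref{decomp_riKstar}.

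I expect the lifting to be the main obstacle, for two reasons. First, one must verify carefully that $\pi$ kills $Y_0$ and acts as the identity on $Y_1$, which is what makes $\langle y^*, y \rangle$ depend only on the $Y_1$-component of $y$ and lets the separation inequality for $z^*$ transfer to $Y$. Second, the hypothesis that $K$ is not a linear subspace is essential here: it guarantees $K_1 \neq \{0\}$, so that $K_1 \setminus \{0\} \neq \emptyset$ and both Lemma~\ref{sclar_finite_dimen} and Theorem~\ref{decomp_riKstar} are genuinely applicable. Once the behaviour of $\pi$ on the splitting $Y = Y_0 + Y_1$ is established, both required properties of $y^*$ fall out of the single separation estimate, and the two inclusions together give \eqref{repr_efficient_solution_set_1}.
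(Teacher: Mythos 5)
Your proposal is correct and follows essentially the same route as the paper: the ``$\supset$'' inclusion via Theorem~\ref{decomp_riKstar}, and the ``$\subset$'' inclusion by reducing to $Y_1$ with Proposition~\ref{relative_eff}, separating with Lemma~\ref{sclar_finite_dimen}, and lifting the functional by composing with $\pi$. Your explicit checks that $0 \in A$ and that $\pi$ annihilates $Y_0$ while acting as the identity on $Y_1$ are exactly the (implicit) points the paper relies on.
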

				
\begin{proof} If $u \in E$, then \eqref{relative_eff_eq1} holds by Proposition \ref{relative_eff}. According to Proposition \ref{image_of_D} and \cite[Corollary 19.3.2]{Rock_book_1970}, $\pi(Mu) - D_1$ is a convex polyhedron in $Y_1$. Using Lemma \ref{sclar_finite_dimen} for the convex polyhedron $\pi(Mu) - D_1$ corresponding the pointed polyhedral convex cone $K_1$ in $Y_1$, one can find $v^* \in Y_1^*$ such that 
				\begin{equation}\label{scalar_theorem_eq1}
				\langle v^*, w  \rangle \leq 0 < \langle v^*,v \rangle, \quad \forall w \in \pi(Mu) - D_1, \forall v \in K_1\setminus\{0\}.
				\end{equation}    	
Setting $y^*=v^* \circ \pi$ and note that $y^* \in Y^*$. For any $x \in D$, since  $\pi(Mu) - \pi(Mx)  \in \pi(Mu) - D_1$, we have
				\begin{equation*}
				\langle y^*, Mu-Mx  \rangle = \langle v^*, \pi(Mu)-\pi(Mx) \rangle  \leq 0.
				\end{equation*}
Hence, we obtain $\langle y^*, Mu \rangle \leq \langle y^*, Mx \rangle$ for all $x \in D$; so $u \in {\rm argmin} \left( {\rm (LP)}_{y^*}\right)$. Let us show that $y^* \in {\rm ri} K^*$. Given any $y \in K\setminus {\ell(K)}$, by Lemma \ref{decomp_K_diff_lK} one can find $y_0 \in Y_0$ and $y_1 \in  K_1\setminus\{0\}$ such that $y=y_0+y_1$. Then
				\begin{equation*}
				\langle y^*, y  \rangle = \langle v^*, \pi(y)  \rangle =\langle v^*, y_1  \rangle >0
				\end{equation*}  
by \eqref{scalar_theorem_eq1}. By Theorem \ref{decomp_riKstar}, $y^* \in {\rm ri}K^*$.  The inclusion $E \subset \bigcup\limits_{y^* \in {\rm ri}K^*} {\rm argmin} \left( {\rm (LP)}_{y^*}\right)$ has been established. 
				
Now, to obtain the reverse inclusion, suppose on contrary that there exists $u \in {\rm argmin} \left( {\rm (LP)}_{y^*}\right)$, with $y^* \in {\rm ri}K^*$, but $ u  \not \in E$. Select an $x \in D$ such that $Mu - Mx \in K\setminus{\ell(K)}$. Then, $\langle y^*, Mu-Mx \rangle >0$ by Theorem \ref{decomp_riKstar}. This contradicts the condition $u \in {\rm argmin} \left( {\rm (LP)}_{y^*}\right)$. The proof of \eqref{repr_efficient_solution_set_1} is thus complete. 		
\end{proof}	
				
\medskip
The scalarization formula  \eqref{repr_efficient_solution_set_1} allows us to obtain the following result on the structure of the efficient solution set of {\rm (VLP)}. 
				
\begin{theorem}\label{structure_efficient_solution_set} The efficient solution set $E$ of {\rm (VLP)} is the union of finitely many generalized polyhedral convex sets.
\end{theorem}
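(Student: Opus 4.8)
The plan is to start from the scalarization formula \eqref{repr_efficient_solution_set_1} of Theorem \ref{scalar_theorem}, which presents $E$ as the union of the solution sets ${\rm argmin}\left({\rm (LP)}_{y^*}\right)$ taken over all $y^* \in {\rm ri}K^*$, and then to show that this a priori infinite union is in fact finite: each nonempty solution set is a ``face-type'' piece of $D$ that is completely determined by a choice among finitely many combinatorial data.

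First I would fix a representation \eqref{rep_D} of the generalized convex polyhedron $D$ provided by Theorem \ref{Luan_Yen_2015_main}, with generators $u_1,\dots,u_k$, direction vectors $v_1,\dots,v_\ell$, and recession subspace $X_0$. For a given $y^* \in {\rm ri}K^*$, I would set $c^*:=y^* \circ M \in X^*$, so that ${\rm (LP)}_{y^*}$ reads $\min\{ \langle c^*, x\rangle \mid x \in D\}$. Substituting the representation into the objective, I would observe that the optimal value is finite exactly when $\langle c^*, v_j \rangle \geq 0$ for every $j$ and $c^*$ vanishes on $X_0$; when the value is finite it is attained and equals $\min_{1\le i\le k}\langle c^*, u_i\rangle$, whereas otherwise ${\rm argmin}\left({\rm (LP)}_{y^*}\right)=\emptyset$ and this index contributes nothing to the union.

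The decisive step is to read off the solution set explicitly. Writing $I:=\{ i \mid \langle c^*, u_i\rangle = \min_{i'}\langle c^*, u_{i'}\rangle\}$ and $J:=\{ j \mid \langle c^*, v_j\rangle = 0\}$, I would verify that
\begin{equation*}
{\rm argmin}\left({\rm (LP)}_{y^*}\right)=\left\{ \sum_{i \in I}\lambda_i u_i + \sum_{j \in J}\mu_j v_j \ \middle|\ \lambda_i \geq 0,\ \sum_{i \in I}\lambda_i = 1,\ \mu_j \geq 0 \right\}+X_0,
\end{equation*}
because an optimal point must assign zero weight to every generator outside $I$ and to every direction $v_j$ with $\langle c^*, v_j\rangle > 0$, while the directions indexed by $J$ together with the whole subspace $X_0$ leave the value unchanged and hence remain free. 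This set is again of the form \eqref{rep_D}, so it is a generalized polyhedral convex set by Theorem \ref{Luan_Yen_2015_main}.

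Finally, the solution set just displayed depends on $y^*$ only through the pair of index sets $(I,J)$, of which there are at most $2^k\cdot 2^\ell$; hence, as $y^*$ runs through ${\rm ri}K^*$, only finitely many distinct sets ${\rm argmin}\left({\rm (LP)}_{y^*}\right)$ occur. Consequently $E$ is the union of these finitely many generalized polyhedral convex sets, which is the assertion. I expect the main obstacle to lie in the careful verification of the displayed formula for the solution set — establishing both inclusions and confirming that the finiteness and attainment conditions on $c^*$ are met for every $y^* \in {\rm ri}K^*$ yielding a nonempty solution set; once this is in place, the reduction to finitely many index pairs is immediate.
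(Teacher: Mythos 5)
Your proposal is correct and follows essentially the same route as the paper: the paper's proof simply combines the scalarization formula \eqref{repr_efficient_solution_set_1} with the fact (imported from the proof of \cite[Theorem 4.5]{Luan_Yen_2015}, via \cite[Proposition 3.6]{Luan_Yen_2015}) that the nonempty sets ${\rm argmin}\left({\rm (LP)}_{y^*}\right)$ form a finite family of generalized polyhedral convex subsets of $D$. What you have done is supply, from the representation \eqref{rep_D}, an explicit and correct verification of that finiteness claim (the argmin is determined by the index pair $(I,J)$, and every representation of an optimal point is forced to use only indices from $I$ and $J$ once $c^*$ vanishes on $X_0$), so your argument is a self-contained version of the one the paper cites.
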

				
\begin{proof} The conclusion follows from \eqref{repr_efficient_solution_set_1} and an argument similar to that of the proof of \cite[Theorem 4.5]{Luan_Yen_2015}. 
\end{proof}
				
\medskip
If the spaces in question are finite dimensional, then the result in   Theorem \ref{structure_efficient_solution_set} expresses one conclusion of the Arrow-Barankin-Blackwell Theorem. The second assertion the latter is that $E$ is connected by line segments. A natural question arises: \textit{Whether the efficient solution set $E$ of {\rm (VLP)} is connected by line segments, or not?}
				
\medskip	
According to \cite{Luc}, the connected by line segments of the efficient solution set $E$ in finite dimensional setting can be proved by a scheme the suggested by Podinovski and Nogin \cite{Podinovski_Nogin}. We now show that an adaption of the scheme on show work for the locally convex Hausdorff topological vector spaces setting which we are interested in. 

\begin{theorem}\label{eff_sol_arcwise}
The efficient solution set $E$ of {\rm (VLP)} is connected by line segments, i.e., for any $u, v$ in $E$, there eixst some elements $u_1,\dots, u_r$ of $E$, with $u_1=u$ and $u_r=v$, such that $[u_i, u_{i+1}] \subset E$ for $i=1,2,\dots,r-1$.
\end{theorem}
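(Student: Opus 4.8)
The plan is to combine the scalarization formula \eqref{repr_efficient_solution_set_1} with the parametric linear programming scheme of Podinovski and Nogin \cite{Podinovski_Nogin}, after transporting everything to the finite-dimensional polyhedron $D_1=(\pi\circ M)(D)\subset Y_1$. If $K$ is a linear subspace, then $K\setminus\ell(K)=\emptyset$, so $E=D$ is a single generalized convex polyhedron and the assertion is trivial; hence I assume $K$ is not a linear subspace and write $S(y^*):={\rm argmin}\,({\rm(LP)}_{y^*})$. First I would record the reduction. For $y^*\in K^*$, Lemma \ref{Kstar_property} gives $\langle y^*,y_0\rangle=0$ for all $y_0\in Y_0$, so $\langle y^*,Mx\rangle=\langle y^*,\pi(Mx)\rangle$ for every $x\in D$; thus minimizing $\langle y^*,M\cdot\rangle$ over $D$ amounts to minimizing $\langle y^*,\cdot\rangle$ over the convex polyhedron $D_1$, and $S(y^*)=\{x\in D\mid \pi(Mx)\in T(y^*)\}$, where $T(y^*):={\rm argmin}\{\langle y^*,z\rangle\mid z\in D_1\}$ is a face of $D_1$. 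In particular each $S(y^*)$ is convex (the intersection of $D$ with the linear preimage under $\pi\circ M$ of a convex set), and by \eqref{repr_efficient_solution_set_1} it lies in $E$ whenever $y^*\in{\rm ri}K^*$.

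Given $u,v\in E$, formula \eqref{repr_efficient_solution_set_1} provides $a,b\in{\rm ri}K^*$ with $u\in S(a)$ and $v\in S(b)$. Since ${\rm ri}K^*$ is convex — by Theorem \ref{reps_riC} applied to the representation of $K^*$ it is the set of strictly positive combinations $\sum_j\lambda_j(-y^*_j)$, $\lambda_j>0$ — the whole segment $w_t:=(1-t)a+tb$, $t\in[0,1]$, stays in ${\rm ri}K^*$, so that $S(w_t)\subset E$ for every $t$. I would then analyze the parametric family $t\mapsto T(w_t)$ on $D_1$. Writing $D_1={\rm conv}\{z_1,\dots,z_p\}+{\rm cone}\{d_1,\dots,d_q\}$ and using that the recession directions satisfy $\langle a,d_j\rangle\ge0$ and $\langle b,d_j\rangle\ge0$ (because $S(a),S(b)\neq\emptyset$), we get $\langle w_t,d_j\rangle\ge0$ on $[0,1]$, so the optimal value $\psi(t):=\min_{z\in D_1}\langle w_t,z\rangle=\min_{1\le i\le p}\langle w_t,z_i\rangle$ is finite and attained for all $t$. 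Being the lower envelope of finitely many affine functions of $t$, $\psi$ is piecewise linear and concave with finitely many breakpoints $0=t_0<t_1<\dots<t_m=1$, and on each open interval $(t_{k-1},t_k)$ the optimal face $T(w_t)$ equals a fixed face $T_k$ of $D_1$.

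The crucial structural step, and the one I expect to require the most care, is the behaviour at the breakpoints. If $z\in T_k$, then $\langle w_t,z\rangle=\psi(t)$ for $t\in(t_{k-1},t_k)$; since both sides are affine in $t$ on the closed interval, continuity forces $\langle w_{t_{k-1}},z\rangle=\psi(t_{k-1})$ and $\langle w_{t_k},z\rangle=\psi(t_k)$. Hence $T_k\subset T(w_{t_{k-1}})\cap T(w_{t_k})$, so that $T(w_{t_k})\supset T_k\cup T_{k+1}$ for $1\le k\le m-1$, while $T_1\subset T(a)$ and $T_m\subset T(b)$. Pulling these inclusions back through $\pi\circ M$ and setting $F_k:=\{x\in D\mid \pi(Mx)\in T_k\}$, I obtain $F_k,F_{k+1}\subset S(w_{t_k})$, together with $F_1\subset S(a)$ and $F_m\subset S(b)$, each $F_k$ being nonempty because $S(w_t)\neq\emptyset$ on $(t_{k-1},t_k)$.

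Finally I would assemble the broken line. Choosing any $p_k\in F_k$ for $k=1,\dots,m$, we have $u,p_1\in S(a)$, $p_k,p_{k+1}\in S(w_{t_k})$ for $k=1,\dots,m-1$, and $p_m,v\in S(b)$. Since each of $S(a)$, $S(w_{t_k})$, $S(b)$ is a convex subset of $E$, the segments $[u,p_1]$, $[p_k,p_{k+1}]$, and $[p_m,v]$ all lie in $E$. The chain $u,p_1,\dots,p_m,v$ then exhibits $u$ and $v$ as joined by finitely many line segments inside $E$, which is exactly the required conclusion.
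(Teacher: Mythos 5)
Your proposal is correct, and it follows the same overall Podinovski--Nogin idea as the paper (connect $u$ and $v$ by walking the segment $w_t=(1-t)a+tb$ inside ${\rm ri}K^*$ and stitching together the convex sets ${\rm argmin}\left({\rm (LP)}_{w_t}\right)\subset E$), but the bookkeeping that produces a \emph{finite} chain is done quite differently. The paper stays in $D$ and invokes two external results from \cite{Luan_Yen_2015}: Proposition~3.6 there, which says every nonempty set ${\rm argmin}\left({\rm (LP)}_{y^*}\right)$ coincides with one of finitely many generalized polyhedral convex sets $F_1,\dots,F_q\subset D$, and Theorem~3.1 there for solution existence along the whole segment; it then shows each $\Delta(i)=\{t\mid F_i\subset {\rm argmin}\left({\rm (LP)}_{\xi^*_t}\right)\}$ is closed and convex and that these sets cover $[0,1]$. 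You instead transport the problem to the finite-dimensional polyhedron $D_1=(\pi\circ M)(D)$ via Lemma~\ref{Kstar_property}, write $D_1$ in Minkowski--Weyl form, and read off everything from the parametric linear program: nonemptiness of ${\rm argmin}$ along the segment comes from the recession-direction inequalities $\langle w_t,d_j\rangle\ge 0$, finiteness comes from the breakpoints of the piecewise-linear concave value function $\psi$, and the crucial "overlap at breakpoints" ($T(w_{t_k})\supset T_k\cup T_{k+1}$) comes from continuity of affine functions --- the correct analogue of the paper's closedness of $\Delta(i)$. Your route is more self-contained (it does not need Proposition~3.6 or Theorem~3.1 of \cite{Luan_Yen_2015}) at the cost of redoing the finite-dimensional reduction already set up for Proposition~\ref{relative_eff}; it also has the merit of explicitly disposing of the case where $K$ is a linear subspace, which the paper's proof silently excludes by invoking Theorem~\ref{scalar_theorem}. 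One small point worth spelling out if you write this up: the constancy of the optimal face $T(w_t)$ on each open interval between breakpoints should be justified by noting that an affine function of $t$ that is nonnegative on an interval and vanishes at an interior point vanishes identically, so the active vertex set and the active recession-direction set cannot change in the interior of an interval of affineness of $\psi$.
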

				
\begin{proof} According to Theorem \ref{scalar_theorem}, given any $u, v$ in $E$, one can find $\xi^*_0, \xi^*_1 \in \textrm{ri}K^*$  such that 
			\begin{equation*}
				u \in {\rm argmin} \left( {\rm (LP)}_{\xi^*_0}\right), \quad v \in {\rm argmin} \left( {\rm (LP)}_{\xi^*_1}\right).
			\end{equation*}
Since $\textrm{ri}K^*$ is a convex set, $\xi^*_t:=(1-t)\xi^*_0+t \, \xi^*_1$ belongs to  $\textrm{ri}K^*$ for every $t \in [0,1]$.  Noting that $\langle y^*, Mx \rangle = \langle M^*y^*,x \rangle$, by \cite[Proposition 3.6]{Luan_Yen_2015}, we can find finitely many nonempty generalized polyhedral convex sets $F_1, \dots, F_q$, which are subsets of $D$ such that, for any $y^* \in Y^*$ with ${\rm argmin} \left( {\rm (LP)}_{y^*}\right)$ is nonempty, the latter solution set coincides with one of the set $F_i$, $i=1,\dots,q$. 
				
By remembering the family $\{F_1,\dots,F_q\}$ we can assume that ${\rm argmin} \left( {\rm (LP)}_{\xi^*_0}\right)=F_1$. For each $i \in \{1,\dots,q\}$, put
\begin{equation*}
\Delta(i)=\left\{t \in [0,1] \, \mid \, F_i \subset {\rm argmin} \left( {\rm (LP)}_{\xi^*_t}\right)  \right\}.
\end{equation*}
To show that $\Delta(i)$ is a convex set, we take any $t_1, t_2 \in \Delta(i)$ and $\lambda \in (0,1)$. For $\bar{t}:=(1-\lambda)t_1+\lambda t_2$ and for any $u \in F_i$, one has
	\begin{equation*}
	\langle \xi^*_{\bar{t}}, Mx-Mu \rangle = (1-\lambda) \langle \xi^*_{t_1}, Mx- Mu \rangle + \lambda \langle \xi^*_{t_2}, Mx- Mu \rangle \geq 0, \quad \forall x \in D.
	\end{equation*}
Thus $u \in {\rm argmin} \left( {\rm (LP)}_{\xi^*_{\bar{t}}}\right)$. It follows that $F_i \subset {\rm argmin} \left( {\rm (LP)}_{\xi^*_{\bar{t}}}\right) $; so $\bar{t} \in \Delta(i)$. The convexity of $\Delta(i)$ has been proved.
				
If $\Delta(i)$ has only one element, it is closed. Now, suppose that $[t_1, t_2) \subset \Delta(i)$, $t_1 < t_2$. Since $\bar{t}:=(1-\lambda)t_1+\lambda t_2 \in \Delta(i)$ for all $\lambda \in (0,1)$, for any $u \in F_i$ and $x \in D$, one has
\begin{equation*}
0 \leq \langle \xi^*_{\bar{t}}, Mx-Mu \rangle = (1-\lambda) \langle \xi^*_{t_1}, Mx- Mu \rangle + \lambda \langle \xi^*_{t_2}, Mx- Mu \rangle. 
\end{equation*}
Letting $\lambda \to 1$, we obtain $\langle \xi^*_{t_2}, Mx- Mu \rangle \geq 0$ for all $u \in F_i$ and $x \in D$. This implies that $F_i \subset {\rm argmin} \left( {\rm (LP)}_{\xi^*_{t_2}}\right) $, i.e., $t_2 \in \Delta(i)$. Similarly, one can show that  if $(t_1, t_2] \subset \Delta(i)$ then $t_1 \in \Delta(i)$. We have thus proved that $\Delta(i)$ is a closed convex set for each $i=1,\dots,q$. Invoking Theorem 3.1 from \cite{Luan_Yen_2015}, it is easy to prove that the set of $y^* \in Y^*$ with ${\rm argmin} \left( {\rm (LP)}_{y^*}\right) \neq \emptyset$ is convex cone. Hence, for any $t \in [0,1]$, ${\rm argmin} \left( {\rm (LP)}_{\xi^*_{t}}\right)  \neq \emptyset$. It follows that $[0,1]=\bigcup\limits_{i=1}^q \Delta(i)$. Consequently, there exist some numbers $t_1, \dots, t_m$ from $[0,1]$ with $t_1 \leq \dots \leq t_{m}$, $t_1=0$, $t_{m}=1$, and $m-1$ indexes $i_1, \dots, i_{m-1}$ such that $[t_j,t_{j+1}] \subset \Delta(i_j)$ for all $j=1,\dots,m-1$. Clearly, $u \in F_{i_0}$ and there exists $i_r$ satisfying $v \in  F_{i_r}$.  Given  $u_{j} \in  F_{i_j}$ for $j=1,\dots,r$, where $u_1=u$ and $u_r=v$. For each $j=1,\dots,r-1$, since $t_{j+1}\in \Delta(i_j) \cap \Delta(i_{j+1})$,  it follows that $u_j \in {\rm argmin} \left( {\rm (LP)}_{\xi^*_{t_{j+1}}}\right) $ and $u_{j+1} \in {\rm argmin} \left( {\rm (LP)}_{\xi^*_{t_{j+1}}}\right) $. Hence, 
		\begin{equation*}
	[u_j,u_{j+1}] \subset  {\rm argmin} \left( {\rm (LP)}_{\xi^*_{t_{j+1}}}\right)  \subset E.
		\end{equation*}
We have already been proved that the line segments $[u_j,u_{j+1}], j=1,\dots,r-1$, connect the vectors $u, v$ in $E$.  The proof is complete.  
\end{proof}

\medskip
A similar relust for the weakly efficient solution set of {\rm (VLP)}.  
\begin{theorem}
	If ${\rm int}K \neq \emptyset$, then the weakly efficient solution set $E^w$ of {\rm (VLP)} is connected by line segments.
\end{theorem}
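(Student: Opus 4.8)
The plan is to follow the scheme of the proof of Theorem \ref{eff_sol_arcwise} verbatim, with the index set ${\rm ri}K^*$ replaced throughout by $K^*\setminus\{0\}$. Two ingredients must be supplied before that scheme can be invoked: a scalarization formula for $E^w$ that plays the role of \eqref{repr_efficient_solution_set_1}, and the convexity of the set over which the union is now taken. Everything else (the finiteness of candidate solution faces, the construction and the closedness/convexity of the index sets $\Delta(i)$, and the chaining of segments) is insensitive to which subset of $K^*$ we use and therefore carries over word for word.

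First I would establish the weak-efficiency counterpart of Theorem \ref{scalar_theorem}, namely
\begin{equation*}
E^w=\bigcup\limits_{y^*\in K^*\setminus\{0\}}{\rm argmin}\left({\rm (LP)}_{y^*}\right),
\end{equation*}
which is contained in \cite{Luan_Yen_2015}. For ``$\subset$'', if $u\in E^w$ then $\big(Mu-M(D)\big)\cap{\rm int}K=\emptyset$; since $Mu-M(D)$ is convex and ${\rm int}K$ is a nonempty open convex cone, a Hahn--Banach separation argument produces $y^*\in Y^*\setminus\{0\}$ with $\langle y^*,Mu-Mx\rangle\le 0$ for all $x\in D$ and $\langle y^*,k\rangle\ge 0$ for all $k\in{\rm int}K$; passing to the closure $K=\overline{{\rm int}K}$ gives $y^*\in K^*\setminus\{0\}$ and $u\in{\rm argmin}\left({\rm (LP)}_{y^*}\right)$. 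For ``$\supset$'', one checks that any $y^*\in K^*\setminus\{0\}$ satisfies $\langle y^*,k\rangle>0$ for every $k\in{\rm int}K$ (otherwise $y^*$ would vanish on a balanced neighborhood of some interior point and hence be $0$, exactly as in the proof of Proposition \ref{reps_intK}(a)); thus a minimizer of ${\rm (LP)}_{y^*}$ admits no $x\in D$ with $Mu-Mx\in{\rm int}K$, i.e. it lies in $E^w$.

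Next I would prove that $K^*\setminus\{0\}$ is convex, which is the one place where the hypothesis ${\rm int}K\neq\emptyset$ is genuinely used. A nonempty interior forces $K-K$ to be a neighborhood of $0$, hence ${\rm span}\,K=Y$ and therefore $K^*\cap(-K^*)=\{0\}$, i.e. $K^*$ is pointed. For a pointed cone, a proper convex combination of two nonzero elements is again nonzero, since a vanishing combination would force a nonzero element into $K^*\cap(-K^*)$; hence $K^*\setminus\{0\}$ is convex. (The degenerate case $K^*=\{0\}$, equivalent to $K=Y$, is trivial, since then $E^w=\emptyset$.) With this convexity in hand, given $u,v\in E^w$ one chooses $\xi^*_0,\xi^*_1\in K^*\setminus\{0\}$ with $u\in{\rm argmin}\left({\rm (LP)}_{\xi^*_0}\right)$ and $v\in{\rm argmin}\left({\rm (LP)}_{\xi^*_1}\right)$, and the whole segment $\xi^*_t=(1-t)\xi^*_0+t\,\xi^*_1$ stays in $K^*\setminus\{0\}$ for $t\in[0,1]$. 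Invoking \cite[Proposition~3.6]{Luan_Yen_2015} for the finite family $F_1,\dots,F_q\subset D$ of candidate solution sets, setting $\Delta(i)=\{t\in[0,1]\mid F_i\subset{\rm argmin}\left({\rm (LP)}_{\xi^*_t}\right)\}$, and repeating the arguments of Theorem \ref{eff_sol_arcwise} that each $\Delta(i)$ is closed and convex, that $[0,1]=\bigcup_{i=1}^q\Delta(i)$, and that the resulting chain of segments lies in $E^w$, completes the proof.

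I expect the only genuinely new obstacle to be the convexity of $K^*\setminus\{0\}$, and in particular the recognition that it can fail when $K^*$ is not pointed. If the segment $\xi^*_t$ were permitted to hit $0$, then ${\rm argmin}\left({\rm (LP)}_{0}\right)=D$, and both the scalarization formula and the covering argument would collapse. The assumption ${\rm int}K\neq\emptyset$ is precisely what rules this out, via the equivalence ``${\rm int}K\neq\emptyset\Rightarrow K^*$ pointed''; verifying this is the crux that distinguishes the present statement from the efficient case, where the analogous role was played by the convexity of ${\rm ri}K^*$.
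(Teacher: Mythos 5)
Your proposal is correct and follows essentially the same route as the paper: the paper likewise reduces everything to the convexity of $K^*\setminus\{0\}$ (proved there by a direct contradiction argument that is equivalent to your pointedness observation, using ${\rm int}K\neq\emptyset$ to force a functional vanishing on $K$ to vanish on all of $Y$) and then reruns the scheme of Theorem \ref{eff_sol_arcwise} with ${\rm ri}K^*$ replaced by $K^*\setminus\{0\}$, quoting the scalarization formula for $E^w$ from \cite{Luan_Yen_2015} rather than re-deriving it as you do.
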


\begin{proof} 
  Let us first prove that the cone $K^*\setminus\{0\}$ is convex. Assume by contradiction that there exist $y^*_1$, $y^*_2 \in K^*\setminus\{0\}$ and $\lambda \in (0,1)$ satisfying $$y^*:=(1-\lambda)y^*_1+\lambda y^*_2 \notin K^*\setminus\{0\}.$$ Since $y^*_1$, $y^*_2 \in K^*$, which is a convex cone, $y^* \in K^*$; hence $y^*=0$. This implies that $\langle y^*_1, y \rangle =0$ for every $y \in K$. By ${\rm int}K \neq \emptyset$, it is not difficult to show that $\langle y^*_1, y \rangle =0$ for all $y \in Y$, which contradicts the assumption $y^*_1 \in K^*\setminus\{0\}$.
  
  Now, by \cite[Theorem 4.5]{Luan_Yen_2015}, we apply the proof scheme of Theorem \ref{eff_sol_arcwise}, with ${\rm ri}K^*$ being replaced by $K^*\setminus\{0\}$, to obtain $E^w$ is connected by line segments. 
\end{proof}

\section*{Acknowledgements}
\noindent 
The author would like to thank Professor Nguyen Dong Yen for his guidance and useful remarks.

\section*{Funding}
\noindent 
This research was supported by the Vietnam National Foundation for Science and Technology Development (NAFOSTED) under grant number 101.01-2014.37.

\end{document}